\theoremstyle{plain}
\tikzset{join/.code=\tikzset{after node path={%
\ifx\tikzchainprevious\pgfutil@empty\else(\tikzchainprevious)%
edge[every join]#1(\tikzchaincurrent)\fi}}}
\tikzset{>=stealth',every on chain/.append style={join},
         every join/.style={->}}
\tikzset{
    >=stealth',
    punkt/.style={
           rectangle,
           rounded corners,
           draw=black, very thick,
           text width=6.5em,
           minimum height=2em,
           text centered},
    pil/.style={
           ->,
           thick,
           shorten <=2pt,
           shorten >=2pt,}
}
\newcommand{\bee}{\begin{enumerate}}
\newcommand{\eee}{\end{enumerate}}
\newcommand{\benn}{\begin{equation*}}
\newcommand{\eenn}{\end{equation*}}
\newcommand{\be}{\begin{equation}}
\newcommand{\ee}{\end{equation}}
\newcommand{\bean}{\begin{eqnarray}}
\newcommand{\eean}{\end{eqnarray}}
\newcommand{\bea}{\begin{eqnarray*}}
\newcommand{\eea}{\end{eqnarray*}}
\newcommand{\w}{\wedge}
\newcommand{\ot}{\otimes}
\newcommand{\p}{\partial}
\newcommand{\la}{\langle}
\newcommand{\ra}{\rangle}
\newcommand{\Ci}{C^{\infty}}
\newcommand{\R}{\mathbb{R}}
\newcommand{\op}[1]{\!\!\mathop{\rm ~#1}\nolimits}
\mathchardef\za="710B  
\mathchardef\zb="710C  
\mathchardef\zg="710D  
\mathchardef\zd="710E  
\mathchardef\zve="710F 
\mathchardef\zz="7110  
\mathchardef\zh="7111  
\mathchardef\zy="7112 
\mathchardef\zi="7113  
\mathchardef\zk="7114  
\mathchardef\zl="7115  
\mathchardef\zm="7116  
\mathchardef\zn="7117  
\mathchardef\zx="7118  
\mathchardef\zp="7119  
\mathchardef\zr="711A  
\mathchardef\zs="711B  
\mathchardef\zt="711C  
\mathchardef\zu="711D  
\mathchardef\zf="711E 
\mathchardef\zq="711F  
\mathchardef\zc="7120  
\mathchardef\zw="7121  
\mathchardef\ze="7122  
\mathchardef\zvy="7123  
\mathchardef\zvw="7124  
\mathchardef\zvr="7125 
\mathchardef\zvs="7126 
\mathchardef\zvf="7127  
\mathchardef\zG="7000  
\mathchardef\zD="7001  
\mathchardef\zY="7002  
\mathchardef\zL="7003  
\mathchardef\zX="7004  
\mathchardef\zP="7005  
\mathchardef\zS="7006  
\mathchardef\zU="7007  
\mathchardef\zF="7008  
\mathchardef\zW="700A  
\newcommand{\cyclic}{\mathop{\kern0.9ex{{+}
\kern-2.15ex\raise-.25ex\hbox{\Large\hbox{$\circlearrowright$}}}}\limits}
\newcommand{\cE}{{\cal E}}
 \newcommand{\cC}{{\cal C}}
 \newcommand{\cA}{{\cal A}}
 \newcommand{\cM}{{\cal M}}
 \newcommand{\cI}{{\cal I}}
\newtheorem{rem}{Remark}
\newtheorem{theo}{Theorem}
\newtheorem{prop}{Proposition}
\newtheorem{lem}{Lemma}
\newtheorem{ex}{Example}
\newtheorem{defi}{Definition}
\DeclareMathAlphabet{\mathpzc}{OT1}{pzc}{m}{it}
\newcommand{\Hom}{\mathrm{Hom}}
   \def\i{\imath}
\newcommand{\Der}{\mathrm{Der}}
\renewcommand{\c}{\circ}
\newtheorem{definition}{Definition}[section]
\newtheorem{proposition}[definition]{Proposition}
\date{}
\begin{document}
\title{The Free Courant Algebroid}
\author{Beno\^it Jubin, Norbert Poncin, and Kyousuke Uchino}
\maketitle
\abstract
{We introduce the category of generalized Courant algebroids and show that it admits a free object on any anchored vector bundle. The free Courant algebroid is built from two components: the generalized Courant algebroid associated to a symmetric Leibniz algebroid and the free symmetric Leibniz algebroid on an anchored vector bundle. Our construction is thus based on the new concept of symmetric Leibniz algebroid. We compare this subclass of Leibniz algebroids with the subclass of Loday algebroids that was introduced and studied in \cite{GKP}.}

\vspace{5mm} \noindent {\bf MSC 2010}: 17B01; 17B62; 53D17; 22A22\medskip

\noindent{\bf Keywords}: Courant algebroid; Leibniz algebroid; Loday algebroid; pseudoalgebra; free object

\thispagestyle{empty}
\tableofcontents

\section{Introduction}

The skew-symmetric non-Jacobi Courant bracket \cite{Co} on sections of ${\cal T}M:= TM\oplus T^\ast M$ was originally introduced by Courant to formulate the integrability condition defining a Dirac structure. However its nature became clear only due to the observation by Liu, Weinstein and Xu \cite{LWX} that $\cal T M$ endowed with the Courant bracket plays the role of a `double' object, in the sense of Drinfeld \cite{Dr}, for a pair of Lie algebroids over $M$. Whereas any Lie bialgebra has a double which is a Lie algebra, the double of a Lie bialgebroid is not a Lie algebroid, but a Courant algebroid -- a generalization of ${\cal T} M$ equipped with the Courant bracket. There is another way of viewing {\it Courant algebroids} as a generalization of Lie algebroids. This requires a change in the definition of the Courant bracket and the use of an analog of the non-antisymmetric Dorfman bracket \cite{Do}. The traditional Courant bracket then becomes the skew-symmetrization of the new one \cite{Roy}. This change replaces one defect with another: a version of the Jacobi identity is satisfied, while the bracket is no longer skew-symmetric. Such algebraic structures have been introduced by Loday \cite{Lo} under the name of {Leibniz algebras}. Canonical examples of Leibniz algebras arise often as {\em derived brackets} introduced by Kosmann-Schwarzbach \cite{K-S,YKS}. Since Leibniz brackets appear naturally in Geometry and Physics in the form of `algebroid brackets', i.e. brackets on sections of vector bundles, there were a number of attempts to formalize
the concept of {\em Leibniz algebroid} \cite{Ba,BV,G1,GKP,GM,ILMP,Ha,HM,KS,MM,SX,Wa}. Note also that a Leibniz algebroid is the horizontal categorification of a Leibniz algebra; vertical categorification leads to Leibniz $n$-algebras and Leibniz $n$-algebroids \cite{AP10,KMP11,KPQ,DP,BP12}.\medskip

In this article, we introduce the category of generalized Courant algebroids and show that it admits a free object on any anchored vector bundle. Our construction is based on the new concept of symmetric Leibniz algebroid. We compare this subclass of Leibniz algebroids with the subclass of Loday algebroids that was introduced and studied in~\cite{GKP}.
\medskip

The present paper is organized as follows. In Section 2, we recall the definitions of the categories of Leibniz algebroids \cite{ILMP}, of Leibniz pseudoalgebras -- their algebraic counterpart --, and of modules over them, as well as the classical notion of Courant algebroid. We then describe, in Section 3, two intersecting subclasses of Leibniz algebroids, namely the class of Loday algebroids (Definition \ref{DLodAld}) \cite{GKP}, which are Leibniz algebroids that admit a generalized right anchor and are thus geometric objects, and the class of symmetric Leibniz algebroids (Definition \ref{DSymLeiAld}), a new concept, made of Leibniz algebroids that satisfy two weak differentiability conditions on the left argument and contain Courant algebroids as a particular example. Examples of symmetric and nonsymmetric Leibniz and Loday algebroids are given. In Section 4, we motivate the definition of generalized Courant algebroids (Definitions \ref{DPrCrtAld} and \ref{DCrtAld}), which are specific symmetric Leibniz algebroids. The prototypical example of a generalized Courant algebroid is the one naturally associated to a symmetric Leibniz algebroid (Theorem \ref{Main}). This associated Courant algebroid is one of the two ingredients of the free Courant algebroid. Moreover, Theorem \ref{Main} allows to understand the origin of the definition of symmetric Leibniz algebroids. The second ingredient is the free symmetric Leibniz algebroid, which we construct in Section 5 (Theorem \ref{Main2} and Proposition \ref{Main3}). In Section 6, we combine the results of Section 4 and Section 5 to build the free Courant algebroid (Theorem \ref{Main4}).\medskip

\newcommand{\inser}[1]{\textcolor{blue}{#1}}
\newcommand{\suppr}[1]{\textcolor{red}{\sout{#1}}}
\newcommand{\define}[1]{\emph{#1}}

\newcommand{\supprim}[1]{\textcolor{red}{\sout{#1}}}

\newcommand{\frakX}{\mathfrak{X}}
\newcommand{\cat}[1]{\texttt{#1}}

\section{Preliminaries}

\subsection{Notation and Conventions}

Unless otherwise specified, manifolds are made of a finite-dimensional smooth structure on a second-countable Hausdorff space.

If $[-,-]$ is a Leibniz bracket, we denote by $-\circ-$ its symmetrization, that is,
\begin{equation}
X \circ Y := [X,Y] + [Y,X]\;
\end{equation}
for any elements $X, Y$ of the Leibniz algebra.

\subsection{Anchored Vector Bundles and Anchored Modules}\label{subsec:anch}
\begin{defi}
If $M$ is a manifold, an \define{anchored vector bundle} over $M$ is a vector bundle $E \to M$ with a vector bundle morphism $a \colon E \to TM$, called its \define{anchor}.

If $R$ is a commutative unital ring and $\cA$ is a commutative unital $R$-algebra, an \define{anchored module} over $\cA$ is an $\cA$-module $\cE$ with an $\cA$-module morphism $a \colon \cE \to \Der \cA$, called its \define{derivative}.
\end{defi}

Of course, here $TM$ is the tangent bundle of $M$ and $\Der\cA$ is the module of derivations of $\cA.$ If $a \colon E \to TM$ is an anchor, we still denote by $a \colon \Gamma E \to \frakX(M) := \Der(\Ci(M))$ the corresponding operation on sections defined pointwise.
Obviously, if $E$ is an anchored vector bundle over $M$ with anchor $a$, then its space $\Gamma E$ of sections is an anchored module over $(\R,\Ci(M))$ with derivative $a$.\medskip

Morphisms of anchored vector bundles (resp., anchored modules) {over a fixed base} (resp., {over a fixed algebra}) are defined in an obvious way, and we obtain categories $\cat{AncVec}(M)$ and $\cat{AncMod}(\cA)$, respectively.
The algebroids (resp., pseudoalgebras) we are going to define in this article will be anchored vector bundles (resp., anchored modules) with extra structure.
They will form, together with their morphisms, categories that are concrete over ${\tt AncVec}(M)$ and ${\tt AncMod}(\cA)$, that is, admit a (faithful) forgetful functor to the latter.
One of our goals is to define left adjoints to these functors, or in other words, to define the free algebroid (resp., pseudoalgebra) of a given type on a given anchored vector bundle (resp., anchored module).
These constructions can be seen as the horizontal categorification of the free Lie, Leibniz$\ldots$ algebra on a given vector space.\medskip

For the different types of pseudoalgebras we are going to define, we will also define \define{modules} over them, using the following general principle: if $V$ is an $R$-module with extra structure, then a \define{$V$-module} is an $R$-module $W$ such that $V \oplus W$ is of the same type as $V$ and contains $V$ as a subobject and $W$ as an abelian ideal in an appropriate sense.
Similarly, a morphism of modules from the $V$-module $W$ to the $V'$-module $W'$ will be a morphism $V \oplus W \to V' \oplus W'$ sending $V$ to $V'$ and $W$ to $W'$.
It is possible to make these statements precise, but we prefer to keep them heuristic here and to work out the details below in the case of Leibniz modules.

\subsection{Leibniz Algebroids}

In this paper, we consider \emph{left Leibniz brackets}, i.e. bilinear brackets that satisfy the Jacobi identity
\begin{equation}\label{eq:jacobi}
[X,[Y,Z]]=[[X,Y],Z]+[Y,[X,Z]]\;.
\end{equation}

We first recall the definition of a Leibniz algebroid given in \cite{ILMP}. Note that this notion of Leibniz algebroid does not impose any differentiability requirement on the first argument of the bracket and is thus not a geometric concept.

\begin{defi}\label{LeiOid} A {\em Leibniz algebroid} is an anchored vector bundle $E\to M$ together with a Leibniz bracket $[-,-]$ on its space $\zG E$ of sections, which satisfy
\begin{equation}\label{axiom1}
[X,fY]=f[X,Y]+a(X)(f)Y,
\end{equation}
for any $f\in C^{\infty}(M)$ and $X,Y\in\Gamma E$.
\end{defi}
It is easily checked that the Leibniz rule (\ref{axiom1}) and the Jacobi identity imply that $a$ is a Leibniz algebra morphism: \begin{equation}\label{axtiom2}
a[X,Y]=[a(X),a(Y)],
\end{equation}
where the {\small RHS} bracket is the Lie bracket on $\Gamma TM$.

Let us also mention that here and in the following we consider {\it left} Leibniz brackets, i.e. bilinear brackets that satisfy the Jacobi identity $$[X,[Y,Z]]=[[X,Y],Z]+[Y,[X,Z]]\;.$$

We will essentially deal with the algebraic counterpart of Leibniz algebroids:

\begin{defi}\label{LeiPsAlg} Let $R$ be commutative unital ring and let $\cA$ be a commutative unital $R$-algebra. A {\em Leibniz pseudoalgebra} (or {\em Leibniz-Rinehart algebra}) over $(R,\cA)$ is an anchored module $(\cE,a)$ over $\cA$ endowed with a Leibniz $R$-algebra structure $[-,-]$, such that, for all $f\in\cA$ and $X,Y\in\cE,$ \begin{itemize}\item $[X,fY]=f[X,Y]+a(X)(f)\,Y$ and \item $a[X,Y]=[a(X),a(Y)]$, where the {\small RHS} is the commutator.\end{itemize} \end{defi}

If the $\cA$-module $\cE$ is faithful, the last requirement is again a consequence of the Leibniz rule and the Jacobi identity.\medskip

The space of sections of a Leibniz algebroid over $M$ is obviously a Leibniz pseudoalgebra over the $\R$-algebra $\Ci(M)$.\medskip

Of course, if, in Definitions \ref{LeiOid} and \ref{LeiPsAlg}, the Leibniz bracket is antisymmetric, we get a Lie algebroid and a Lie pseudoalgebra, respectively. \medskip

Leibniz algebroids over $M$ and Leibniz pseudoalgebras over $(R,\cA)$ are the objects of categories ${\tt LeiOid}\,M$ and ${\tt LeiPsAlg}\,(R,\cA)$, respectively. The morphisms of these categories are defined as follows.

\begin{defi}
Let $(E_{1},[-,-]_1,a_1)$ and $(E_{2},[-,-]_2,a_2)$ be two Leibniz algebroids over a same manifold $M$. A {\em Leibniz algebroid morphism} between them is a bundle map $\phi:E_{1}\to E_{2}$ such that $a_{2}\,\phi=a_{1}$ and  $\phi[X,Y]_1=[\phi X,\phi Y]_2$, for any $X,Y\in\Gamma E_{1}$.
\end{defi}

\begin{defi}
Let $(\cE_{1},[-,-]_1,a_1)$ and $(\cE_{2},[-,-]_2,a_2)$ be two Leibniz pseudoalgebras over the same pair $(R,\cA)$. A {\em Leibniz pseudoalgebra morphism} between them is an $\cA$-module morphism $\phi:\cE_{1}\to \cE_{2}$ such that $a_{2}\,\phi=a_{1}$ and  $\phi[X,Y]_1=[\phi X,\phi Y]_2$, for any $X,Y\in\cE_{1}$.
\end{defi}

We now define (bi)modules over Leibniz algeboids and pseudoalgebras.\medskip

Recall first the definition of a module over a Leibniz $R$-algebra $(V,[-,-])$.
By the general heuristic described above, this is an $R$-module $W$ with a Leibniz $R$-algebra structure $[-,-]$ on $V \oplus W$ containing $V$ as a subalgebra and $W$ as an abelian ideal.
Therefore, this bracket has to be the original bracket on $V \times V$, and 0 on $W \times W$, so it is determined by the values of $[x,w]$ and $[w,x]$, where $x \in V$ and $w \in W$.
Setting $\mu^l(x)(w) = [x,w]$ and $\mu^r(x)(w) = [w,x]$, we recover the usual notion: A {\it (bi)module} over a Leibniz $R$-algebra $(V,[-,-])$ is an $R$-module $W$ together with a left and a right {\it action} $\zm^l\in\op{Hom}_R(V\otimes_R W,W)$ and $\zm^r\in\op{Hom}_R(W\otimes_R V,W)$, which satisfy the following requirements \be\label{VVW}
\zm^r[x,y]=\zm^r(y)\zm^r(x)+\zm^l(x)\zm^r(y)\;,\ee \be\label{WVV}
\zm^r[x,y]=\zm^l(x)\zm^r(y)-\zm^r(y)\zm^l(x)\;,\ee \be\label{VWV}
\zm^l[x,y]=\zm^l(x)\zm^l(y)-\zm^l(y)\zm^l(x)\;,\ee for all $x,y\in V.$

In particular, let $\nabla$ be a representation of $(V,[-,-])$ on $W$, i.e. a Leibniz $R$-algebra morphism $V\to \op{End}_R(W)$. Then $\zm^\ell=\nabla$ and $\zm^r=-\nabla$ is a module structure over $V$ on $W$. \medskip

\begin{defi}\label{algebroidmodule}
Let $(E_{1},[-,-],a)$ be a Leibniz algebroid over $M$. A {\em module over the Leibniz algebroid} $E_1$ is a $C^{\infty}(M)$-module $\cE_2$ (not necessarily a locally free sheaf of $\Ci$-modules, i.e. not necessarily a module of sections of a vector bundle), which is a module $(\zm^\ell,\zm^r)$ over the Leibniz $\R$-algebra $\Gamma E_{1}$ whose left action satisfies the Leibniz rule
\be\label{LeibRule}
\zm^\ell(X)(fY)=f\zm^\ell(X)(Y)+a(X)(f)Y\;,
\ee
for any $f\in\Ci(M), X\in\Gamma E_{1}$, and $Y\in \cE_{2}$ (for convenience, we denote the left $\zG E_1$-action on $\cE_2$ and the Leibniz bracket on $\zG E_1$ by the same symbol).\end{defi}

\begin{rem}
In this definition, the $\Ci(M)$-module $\cE_2$ is not required to define a locally free sheaf of $\Ci(M)$-modules, i.e. it is not required to be a module of sections of a vector bundle.
\end{rem}

Similarly,

\begin{defi}\label{pseudoalgebramodule}
Let $(\cE_{1},[-,-],a)$ be a Leibniz pseudoalgebra over $(R,\cA)$. A {\em module over the Leibniz pseudoalgebra} $\cE_1$ is an $\cA$-module $\cE_2$ (hence an $R$-module), which is a module over the Leibniz $R$-algebra $\cE_{1}$ whose left action $\zm^\ell$ satisfies the Leibniz rule
$$
\zm^\ell(X)(fY)=f\zm^\ell(X)(Y)+a(X)(f)Y\;,
$$
for any $f\in\cA, X\in\cE_{1}$, and $Y\in \cE_{2}$.\end{defi}

In the case $\cE_1=\zG E_1$ and $\cE_2=\zG E_2$, where $E_1$  and $E_2$ are vector bundles over $M$, and $(\zm^\ell,\zm^r)=(\nabla,-\nabla)$, where $\nabla$ is a representation of $\zG E_1$ on $\zG E_2$, we deal with an $\R$-bilinear map $\nabla:\zG E_1\times \zG E_2\to \zG E_2$ such that, for any $f\in\Ci(M)$, $X,X_1,X_2\in\zG E_1$, and $Y\in\zG E_2$, $$\nabla_{[X_1,X_2]}=[\nabla_{X_1},\nabla_{X_2}]$$ and $$\nabla_X(fY)=a(X)(f)Y+f\nabla_XY\;.$$ If $\nabla$ is in addition $\Ci(M)$-linear in its first argument, the module structure is nothing but a flat $E_1$-connection on $E_2$. In the case of a Lie algebroid $E_1$, we thus recover the classical concept of $E_1$-module.

Note also that for any Leibniz algebroid $(E_1,[-,-],a)$, the $\Ci(M)$-module $\cE_2=\zG(M\times\R)=\Ci(M)$ and the actions $(\zm^\ell,\zm^r)=(a,-a)$ define on $\Ci(M)$ a module structure over the Leibniz algebroid $E_1$.

Let us emphasize that, just as for the Leibniz bracket on $\zG E_1$, we do not impose any differentiability condition on $\zm^r$.\medskip

Finally we define morphisms of modules over Leibniz pseudoalgebras.
From the above heuristic, a morphism from the $\cE_1$-module $\cE_2$ to the $\cE_1'$-module $\cE_2'$ should be a Leibniz pseudoalgebra morphism from $\cE_1 \oplus \cE_2$ to $\cE_1' \oplus \cE_2'$ sending $\cE_1$ to $\cE_1'$ and $\cE_2$ to $\cE_2'$. Unpacking this principle gives the following

\begin{defi} Let $(\cE_1,[-,-],a)$ and $(\cE'_1,[-,-]',a')$ be two Leibniz pseudoalgebras, and let $(\cE_2,\zm^\ell,\zm^r)$ and $(\cE'_2,\zm'^\ell,\zm'^r)$ be an $\cE_1$-module and an $\cE'_1$-module, respectively. A {\em morphism} between these two modules, is a pair $(\zf_1,\zf_2)$ made of a morphism $\zf_1:\cE_1\to\cE'_1$ of Leibniz pseudoalgebras and an $\cA$-linear map $\zf_2:\cE_2\to\cE'_2$, such that \be\label{RespActs}\zm'^\ell(\zf_1\times\zf_2)=\zf_2\,\zm^\ell\;\;\text{and}\;\;\zm'^r(\zf_2\times\zf_1)=\zf_2\,\zm^r\;.\ee\end{defi}

\subsection{Courant Algebroids}

As for the definition of Courant algebroids, we refer the reader to \cite{LWX}, \cite{Roy0}, \cite{GM2}, \cite{Uch}, and \cite{Kos3}.

\begin{defi}\label{CrtAld} A {\em Courant algebroid} is an anchored vector bundle $E\to M$, with anchor $a$, together with a Leibniz bracket $[-,-]$ on $\zG E$ and a bundle map $(-|-) \colon E\otimes E\to M\times\R$ that is in each fiber nondegenerate symmetric, called \define{scalar product}, which satisfy \be\label{4} a(X)(Y|Y)=2(X|[Y,Y])\;,\ee \be a(X)(Y|Y)=2([X,Y]|Y)\;,\label{5} \ee for any $X,Y\in\zG E$.
\end{defi}

\medskip\noindent The nondegeneracy of the scalar product allows us to identify $E$ with its dual $E^*$, and we will use this identification implicitly in the following. Note that (\ref{4}) is equivalent to \begin{equation}\label{4a} a(X)(Y|Z)=(X|Y\circ Z)\;,\end{equation} where $Y\circ Z$ denotes the symmetrized bracket. Similarly, (\ref{5}) easily implies the invariance of the scalar product, \begin{equation}\label{6}a(X)(Y|Z)=([X,Y]|Z)+(Y|[X,Z])\;,\end{equation} which in turn shows that $a$ is the anchor of the left adjoint map: \be\label{zr} [X,fY]=f[X,Y]+a(X)(f)Y\;. \ee Hence, a Courant algebroid is a particular Leibniz algebroid. When defining a derivation $D:\Ci(M)\to\zG E$ by \be\label{D}(Df|X)=a(X)(f)\;,\ee we get out of (\ref{4a}) that \be\label{4c}D(Y|Z)=Y\circ Z=[Y,Z]+[Z,Y]\;.\ee The fact that (\ref{4c}) is a consequence of the `invariance' condition (\ref{4a}) and the nondegeneracy of the scalar product, will be of importance later on. Let us moreover stress that (\ref{4c}) implies a differentiability condition for the first argument of the Leibniz bracket: \be\label{DiffCondFirst} [fX,Y]=f[X,Y]-a(Y)(f)X+(X|Y)Df\;.\ee

It is now clear that

\begin{prop} Courant algebroids $(E,[-,-],(-|-),a)$ are exactly the Leibniz algebroids $(E,[-,-],a)$ endowed with a scalar product $(-|-)$, such that, for any $X,Y,Z\in\zG E$,
\be\label{Inv1}a(X)(Y|Z)=([X,Y]|Z)+(Y|[X,Z])\;,\ee
\be\label{Inv2}a(X)(Y|Z)=(X|[Y,Z]+[Z,Y])\;.\ee
\end{prop}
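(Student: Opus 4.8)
The plan is to prove the stated equivalence by establishing the two inclusions separately, while keeping in mind that Definition \ref{CrtAld} only posits an anchored vector bundle equipped with a Leibniz bracket and a scalar product, and does \emph{not} assume a priori the Leibniz rule (\ref{axiom1}). Hence the nontrivial content of the proposition is precisely that this differentiability rule is \emph{forced} by the invariance conditions together with nondegeneracy. Throughout, the principal tool is polarization, combined with the symmetry and nondegeneracy of $(-|-)$, exactly as in the discussion preceding the statement; I would organize that discussion into a clean two-way argument.

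For the direction from Definition \ref{CrtAld} to the conditions (\ref{Inv1})--(\ref{Inv2}), I would first polarize. Substituting $Y+Z$ for $Y$ in (\ref{4}) and expanding by bilinearity and the symmetry of $(-|-)$, and then subtracting off the two diagonal instances of (\ref{4}), the cross terms leave $a(X)(Y|Z)=(X|[Y,Z]+[Z,Y])$, which is (\ref{4a}), that is (\ref{Inv2}); the same substitution in (\ref{5}), using $([X,Z]|Y)=(Y|[X,Z])$, leaves (\ref{6}), that is (\ref{Inv1}). It then remains to deduce the Leibniz rule. Using that $a(X)$ is a derivation of $\Ci(M)$, I would compute $a(X)(fY|Z)$ in two ways: on one hand expand $a(X)\big(f\,(Y|Z)\big)$ by the derivation property and substitute (\ref{6}) for the term $f\,a(X)(Y|Z)$; on the other hand apply (\ref{6}) directly to the pair $(fY,Z)$. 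Subtracting, the terms $f(Y|[X,Z])$ cancel and one is left with $([X,fY]|Z)=\big(f[X,Y]+a(X)(f)\,Y\,|\,Z\big)$ for every $Z$. Nondegeneracy of the scalar product then yields (\ref{zr}), i.e. (\ref{axiom1}), so the object is a genuine Leibniz algebroid; the morphism property (\ref{axtiom2}) of $a$ then follows as recalled after Definition \ref{LeiOid}.

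For the converse the argument is purely formal: a Leibniz algebroid is in particular an anchored vector bundle carrying a Leibniz bracket, so all the data of Definition \ref{CrtAld} are already present, and it suffices to recover (\ref{4}) and (\ref{5}). Setting $Z=Y$ in (\ref{Inv2}) gives $a(X)(Y|Y)=(X|[Y,Y]+[Y,Y])=2(X|[Y,Y])$, which is (\ref{4}); setting $Z=Y$ in (\ref{Inv1}) and using the symmetry of $(-|-)$ gives $a(X)(Y|Y)=2([X,Y]|Y)$, which is (\ref{5}).

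I expect the only genuinely nontrivial step to be the derivation of the Leibniz rule from (\ref{Inv1}): it is the single place where nondegeneracy of $(-|-)$ enters in an essential way, everything else being formal polarization and restriction to the diagonal. A minor point I would state explicitly is that polarization is reversible here because $2$ is invertible in $\R$, so that the diagonal identities (\ref{4}),(\ref{5}) are genuinely equivalent to their polarized forms (\ref{Inv2}),(\ref{Inv1}), and not merely implied by them.
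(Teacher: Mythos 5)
Your proposal is correct and follows essentially the same route as the paper: polarization of (\ref{4}) and (\ref{5}) yields (\ref{4a}) and (\ref{6}), the Leibniz rule (\ref{zr}) is then extracted from (\ref{6}) via nondegeneracy, and the converse is restriction to the diagonal. You merely make explicit what the paper leaves as "it is now clear", including the (correct) observation that invertibility of $2$ makes the polarization reversible.
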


As already indicated above, we view the conditions (\ref{Inv1}) and (\ref{Inv2}), as well as their consequence \be\label{Inv3}([X,Y]|Z)+(Y|[X,Z])=(X|[Y,Z]+[Z,Y])\;,\ee as the invariance properties of the scalar product. We will come back to this idea in Subsection \ref{Back}.\medskip

Finally, we define the algebraic version of Courant algebroids \cite{jub}.
\begin{defi}
A \define{Courant pseudoalgebra} over an $R$-algebra $\cA$ is an anchored module $(\cE,a)$ endowed with a Leibniz bracket $[-,-]$ and with a nondegenerate bilinear symmetric form $(-|-) \colon \cE \times \cE \to \cA$, such that
\be
([X,Y]|Y) = (X|[Y,Y])\;,\ee
\be a(X) (Y|Z) = ([X,Y]|Z) + (Y|[X,Z])\;,\ee
for any $X, Y, Z \in \cE$.
\end{defi}

As in the geometric case, these conditions imply, under the identification of $\cE$ with $\cE^*$ given by the scalar product, that
\begin{equation}
a(X) (Y|Z) = (X | Y \circ Z)\;,
\end{equation}
for any $X, Y, Z \in \cE$.\medskip

Obviously, if $E \to M$ is a Courant algebroid, then $\Gamma E$ is a Courant pseudoalgebra over $\Ci(M)$.

\section{Subclasses of Leibniz Algebroids}

\subsection{Loday Algebroids}

In \cite{GKP}, the authors consider specific Leibniz algebroids, called {\it Loday algebroids}, which have a right anchor satisfying a condition analogous to (\ref{DiffCondFirst}), and show that almost all Leibniz algebroids met in literature are Loday algebroids in their sense.

\begin{defi}\label{DLodAld} A {\em Loday algebroid} is a Leibniz algebroid $(E,[-,-],a)$ equipped with a derivation $$\textsf{D}:C^{\infty}(M)\to\Hom_{C^{\infty}(M)}(\zG (E^{\ot 2}),\zG E)\;,$$ such that
\begin{equation}\label{RightDiffProp}
[fX,Y]=f[X,Y]-a(Y)(f)X+(\textsf{D}f)(X,Y)\;,
\end{equation} for any $X, Y \in \zG E$ and $f \in \Ci(M)$.
\end{defi}

Let us mention that the right anchor $\textsf{\em D}$ can be viewed as a bundle map $\textsf{\em D}:E\to TM\otimes \op{End}E$ (whereas the left anchor is a bundle map $a:E\to TM$). Its local form is $$(\textsf{\em D}f)(X^ie_i,Y^je_j)=X^i \textsf{\em D}\,_{ij}^{\ell k} \p_k f\; Y^j e_\ell$$ (whereas the local form of $a$ is $$a(X^ie_i)(f)Y=X^i a_i^k \p_k f\; Y^\ell e_\ell\;)\;.$$

\begin{ex}[\cite{GKP}, Section 5] Leibniz algebra brackets, Courant-Dorfman brackets, twisted Courant-Dorfman brackets, Courant algebroid brackets, brackets associated to contact structures, Grassmann-Dorfman brackets, Grassmann-Dorfman brackets for Lie algebroids, Lie derivative brackets for Lie algebroids, Leibniz algebroid brackets associated to Nambu-Poisson structures... are Loday algebroid brackets.\end{ex}

For instance, it is clear from what was said above that, in the case of Courant algebroids, the derivation $\textsf{\em D}$ is given by $$\textsf{\em D}:\Ci(M)\times\zG E\times \zG E\ni (f,X,Y)\mapsto (X|Y)Df\in \zG E\;.$$

The algebraic version of Loday algebroids is defined as follows:

\begin{defi} A {\em Loday pseudoalgebra} is a Leibniz pseudoalgebra $(\cE,[-,-],a)$ over $(R,\cA)$ equipped with a derivation $$\textsf{D}:\cA\to\Hom_{\cA}(\cE\otimes_{\cA}\cE,\cE)\;,$$ such that
\begin{equation}\label{RightDiffProp}
[fX,Y]=f[X,Y]-a(Y)(f)X+(\textsf{D}f)(X,Y)\;.
\end{equation}
\end{defi}
\subsection{Symmetric Leibniz Algebroids}

We now introduce another subclass of Leibniz algebroids, {\it symmetric Leibniz algebroids}, which satisfy two weak differentiability conditions on the left argument, and contain Courant algebroids as a particular example.

\begin{defi}\label{DSymLeiAld} A {\em symmetric Leibniz algebroid} is a Leibniz algebroid $(E,[-,-],a)$ over $M$, such that, for any $f\in C^{\infty}(M)$, $X,Y\in\Gamma E$,
\be\label{S1}X\c fY-(fX)\c Y=0\;\;\text{and}\;,\ee
\be\label{S2}[fX,Y\c Z]-[X,Y]\c fZ-(fY)\c[X,Z]=0\;.\ee
\end{defi}

The definition can be equivalently formulated as follows:

\begin{prop} A {\em symmetric Leibniz algebroid} is a Leibniz algebroid $(E,[-,-],a)$ over $M$, such that, for any $f\in C^{\infty}(M)$, $X,Y\in\Gamma E$,
\be\label{S1b}X\c fY-(fX)\c Y=0\;\;\text{and}\;,\ee
\be\label{S2b}([fX,Y]-f[X,Y])\c Z + Y\c ([fX,Z]-f[X,Z])=0\;.\ee\end{prop}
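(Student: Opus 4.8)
The plan is to show that the two defining conditions of a symmetric Leibniz algebroid, namely \eqref{S1}--\eqref{S2}, are equivalent to the pair \eqref{S1b}--\eqref{S2b}, given that we already work inside a Leibniz algebroid. Since condition \eqref{S1} and \eqref{S1b} are literally identical, the whole content of the proposition is the equivalence of \eqref{S2} and \eqref{S2b} \emph{in the presence of} \eqref{S1}. So first I would fix the first condition as a standing hypothesis and concentrate entirely on transforming \eqref{S2} into \eqref{S2b}.

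\medskip

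The key computational tool will be the Leibniz rule \eqref{axiom1}, which governs how $f$ can be pulled out of the \emph{second} slot of the bracket, together with its consequence for the symmetrized bracket $\circ$. First I would record how the Leibniz rule acts on $Y \circ fZ = [Y,fZ]+[fZ,Y]$; note that \eqref{axiom1} controls $[Y,fZ] = f[Y,Z] + a(Y)(f)Z$, but says nothing directly about $[fZ,Y]$. This is exactly where condition \eqref{S1} intervenes: applied with suitable arguments it relates $(fX)\circ Y$ to $X \circ fY$, and hence controls the otherwise-unconstrained first-slot behaviour of $\circ$. The strategy is therefore to expand the left-hand side of \eqref{S2}, which reads $[fX,Y\circ Z]-[X,Y]\circ fZ-(fY)\circ[X,Z]$, by using \eqref{S1} to rewrite the factors $(fY)\circ[X,Z]$ and $[X,Y]\circ fZ$ into forms where the scalar $f$ sits in a position controlled by the Leibniz rule, and then regroup. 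The target expression \eqref{S2b}, namely $\bigl([fX,Y]-f[X,Y]\bigr)\circ Z + Y \circ \bigl([fX,Z]-f[X,Z]\bigr)$, isolates precisely the ``failure of the Leibniz rule in the first slot'' in each of the two $\circ$-factors, so I expect that expanding both \eqref{S2} and \eqref{S2b} and reducing each to a common normal form using \eqref{axiom1} and \eqref{S1} will exhibit them as the same element, up to terms that cancel by bilinearity.

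\medskip

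Concretely, I would carry out the proof as a chain of identities. Starting from \eqref{S2b}, distribute $\circ$ over the differences and regroup the four resulting terms as $[fX,Y]\circ Z + Y\circ[fX,Z]$ minus $f[X,Y]\circ Z + Y\circ f[X,Z]$; then apply \eqref{S1} to each $f$-weighted summand to move the scalar, converting, for example, $f[X,Y]\circ Z$ into $[X,Y]\circ fZ$ and $Y \circ f[X,Z]$ into $(fY)\circ[X,Z]$. What remains to verify is that $[fX,Y]\circ Z + Y \circ [fX,Z]$ equals $[fX, Y\circ Z]$; this is the ``Leibniz-type'' compatibility of the bracket $[fX,-]$ with the symmetrization $\circ$, and it should follow from the Jacobi identity \eqref{eq:jacobi} applied to the element $fX$ in the first slot, together with the definition $Y\circ Z = [Y,Z]+[Z,Y]$. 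Running these substitutions should turn \eqref{S2b} exactly into \eqref{S2}, and reversing each step gives the converse.

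\medskip

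The main obstacle, I expect, lies in the step identifying $[fX,Y\circ Z]$ with $[fX,Y]\circ Z + Y\circ[fX,Z]$: the Jacobi identity \eqref{eq:jacobi} governs the bracket in its \emph{inner} second slot, producing $[fX,[Y,Z]] = [[fX,Y],Z] + [Y,[fX,Z]]$, but the symmetrized bracket $Y\circ Z$ also contains the reversed term $[Z,Y]$, and applying Jacobi to $[fX,[Z,Y]]$ yields terms of the shape $[[fX,Z],Y]$ and $[Z,[fX,Z]]$ whose reorganization into $\circ$-brackets is not automatic. Making this bookkeeping close up correctly — in particular checking that the cross terms assemble into the \emph{symmetrized} brackets $[fX,Y]\circ Z$ and $Y\circ[fX,Z]$ rather than leaving a residual antisymmetric remainder — is the delicate point, and it is precisely where the first condition \eqref{S1} must be invoked a second time to absorb any leftover $f$-dependent discrepancy. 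Once this single identity is secured, the equivalence of \eqref{S2} and \eqref{S2b} is a matter of routine substitution, and the proposition follows.
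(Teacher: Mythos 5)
Your proposal is correct and follows essentially the same route as the paper: reduce everything to the equivalence of \eqref{S2} and \eqref{S2b} in the presence of \eqref{S1}, use \eqref{S1} to rewrite $[X,Y]\circ fZ$ as $(f[X,Y])\circ Z$ and $(fY)\circ[X,Z]$ as $Y\circ(f[X,Z])$, and invoke the identity $[W,Y\circ Z]=[W,Y]\circ Z+Y\circ[W,Z]$ with $W=fX$. The one step you flag as a potential obstacle is in fact immediate and needs no second use of \eqref{S1}: writing $Y\circ Z=[Y,Z]+[Z,Y]$ and applying the Jacobi identity \eqref{eq:jacobi} to each summand gives $[W,Y\circ Z]=[[W,Y],Z]+[Y,[W,Z]]+[[W,Z],Y]+[Z,[W,Y]]$, and the first and fourth terms assemble into $[W,Y]\circ Z$ while the second and third assemble into $Y\circ[W,Z]$, with no residual remainder.
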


\begin{proof} It suffices to show that, for a Leibniz algebroid that satisfies the first condition, the conditions (\ref{S2}) and (\ref{S2b}) are equivalent. Note first that the Jacobi identity implies that $[X,Y\circ Z]=[X,Y]\circ Z+Y\c [X,Z]$. It now follows that (\ref{S2}) is equivalent to $$[fX,Y]\c Z+Y\c [fX,Z]=[fX,Y\c Z]$$ $$=[X,Y]\c fZ+(fY)\c[X,Z]=(f[X,Y])\c Z+Y\c f[X,Z]\;.$$ \end{proof}

The first condition (\ref{S1b}) means that the symmetrized bracket is $\Ci(M)$-linear between the two variables $X,Y$. The second condition (\ref{S2b}) is a $\Ci(M)$-linearity condition in a combination of symmetrized products. \medskip

\begin{prop} A Loday algebroid $(E,[-,-],a,\textsf{D})$ over a manifold $M$ is a symmetric Leibniz algebroid if and only if, for all $f\in\Ci(M)$ and all $X,Y,Z\in\zG E,$ \be\label{LodS1}(\textsf{D}f)(X,Y)=(\textsf{D}f)(Y,X)\;\;\text{and}\;\ee \be\label{LodS2}(\textsf{D}f)(X,[Y,Z]+[Z,Y])=(\textsf{D}f)([X,Y],Z)+(\textsf{D}f)(Y,[X,Z])\;.\ee \end{prop}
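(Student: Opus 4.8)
The plan is to exploit the reformulation of symmetric Leibniz algebroids given in the preceding Proposition, together with the observation made in its proof that, once the first condition (\ref{S1b}) holds, the axioms (\ref{S2}) and (\ref{S2b}) are interchangeable. The whole argument then rests on two ``transfer identities'' measuring the failure of the symmetrized bracket to be $\Ci(M)$-linear. Combining the left Leibniz rule (\ref{axiom1}) with the right differentiability property (\ref{RightDiffProp}), I would first establish, for all $f\in\Ci(M)$ and $X,Y\in\zG E$,
\[
X\circ fY = f\,(X\circ Y) + (\textsf{D}f)(Y,X), \qquad (fX)\circ Y = f\,(X\circ Y) + (\textsf{D}f)(X,Y).
\]
Both are immediate: in each case the two $a(-)(f)$-terms coming from the left and right rules cancel, leaving a single $\textsf{D}f$-contribution. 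These formulas are the computational engine for everything that follows.

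Subtracting the two transfer identities gives $X\circ fY - (fX)\circ Y = (\textsf{D}f)(Y,X) - (\textsf{D}f)(X,Y)$, so condition (\ref{S1b}) holds for all $f,X,Y$ if and only if $\textsf{D}f$ is symmetric, i.e. (\ref{LodS1}). This disposes of the first axiom in both directions simultaneously.

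For the second axiom I would assume (\ref{LodS1}) (equivalently (\ref{S1b})) and work with (\ref{S2}). Expanding its left-hand side by (\ref{RightDiffProp}) produces a term $a(Y\circ Z)(f)X$ which vanishes: since $a$ is a Leibniz morphism into the antisymmetric Lie bracket by (\ref{axtiom2}), one has $a(Y\circ Z) = [aY,aZ]+[aZ,aY]=0$. Using in addition the Jacobi consequence $[X,Y\circ Z]=[X,Y]\circ Z + Y\circ[X,Z]$ recalled in the previous proof, the left-hand side becomes $f\bigl([X,Y]\circ Z + Y\circ[X,Z]\bigr) + (\textsf{D}f)(X,Y\circ Z)$. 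Expanding the right-hand side $[X,Y]\circ fZ + (fY)\circ[X,Z]$ by the two transfer identities yields the same $f$-linear part together with $(\textsf{D}f)(Z,[X,Y]) + (\textsf{D}f)(Y,[X,Z])$. The $f$-linear parts cancel, so (\ref{S2}) reduces to $(\textsf{D}f)(X,Y\circ Z) = (\textsf{D}f)(Z,[X,Y]) + (\textsf{D}f)(Y,[X,Z])$; finally, the symmetry (\ref{LodS1}) lets me rewrite $(\textsf{D}f)(Z,[X,Y])$ as $(\textsf{D}f)([X,Y],Z)$, turning this into exactly (\ref{LodS2}). Reading the resulting chain of equivalences in both directions then yields the claim.

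The only genuine obstacle is bookkeeping: the symmetrized bracket $\circ$ is merely $\R$-bilinear and symmetric, not $\Ci(M)$-linear, so one must apply the transfer identities with the correct arguments and carefully track the order of entries in each $(\textsf{D}f)(-,-)$, which is precisely where the symmetry hypothesis (\ref{LodS1}) is used. Everything else is the cancellation of the $f$-linear terms, guaranteed by the Jacobi identity and the vanishing of $a$ on symmetrized brackets.
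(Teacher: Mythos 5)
Your proof is correct and follows exactly the route the paper intends: its own ``proof'' is a one-sentence sketch citing precisely the three ingredients you use -- the left Leibniz rule (\ref{axiom1}), the right differentiability property (\ref{RightDiffProp}), and the antisymmetry of the Lie bracket of vector fields (which gives $a(Y\circ Z)=0$). Your write-up simply supplies the computation the paper omits, and the details (the two transfer identities, the cancellation of the $f$-linear parts via the Jacobi consequence, and the use of (\ref{LodS1}) to reorder the arguments of $\textsf{D}f$) all check out.
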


\begin{proof} It follows from the differentiability properties (\ref{axiom1}) and (\ref{RightDiffProp}), and from the antisymmetry $a[X,Y]=-a[Y,X]$ of the bracket of vector fields, that the $\Ci(M)$-linearity conditions (\ref{S1}) and (\ref{S2}) are equivalent to (\ref{LodS1}) and (\ref{LodS2}).\end{proof}

\begin{ex}\label{ExSym1} Leibniz algebra brackets, Courant-Dorfman brackets, twisted Courant-Dorfman brackets, Courant algebroid brackets, brackets associated to contact structures, Grassmann-Dorfman brackets, Grassmann-Dorfman brackets for Lie algebroids... are Loday algebroid and symmetric Leibniz algebroid brackets. \end{ex}

Indeed, since for a Courant algebroid $(E,[-,-],(-|-),a)$ over $M$, with derivation $D:\Ci(M)\to \zG E$, we have $\textsf{\em D}=(-|-)D$, the $\Ci(M)$-linearity conditions (\ref{LodS1}) and (\ref{LodS2}) are direct consequences of the symmetry and the invariance properties of the scalar product. For definitions concerning the other examples, we refer the reader to \cite{GKP}, Section 5.

\begin{ex} Lie derivative brackets for Lie algebroids, Leibniz algebroid brackets associated to Nambu-Poisson structures... are Loday algebroid but $(\,$usually$\,)$ nonsymmetric Leibniz algebroid brackets.\end{ex}

We examine the first example. Let $(E,[-,-]_E,a_E)$ be a Lie algebroid, let $d^E:\zG(\w^\bullet E^*)\to \zG(\w^{\bullet +1}E^*)$ be the Lie algebroid differential, and denote by $${\cal L}^E:\zG E\times \zG(\w^\bullet E^*)\ni (X,\zw)\mapsto i_Xd^E\zw+d^Ei_X\zw\in \zG(\w^\bullet E^*)$$ the Lie algebroid Lie derivative, where $i_X$ is the interior product. There is a Leibniz bracket on sections of the vector bundle $E\oplus\w E^*$. Indeed, set, for any $X,Y\in\zG E$ and any $\zw,\zh\in\zG(\w E^*)$, \be\label{CDA1}[X+\zw,Y+\zh]=[X,Y]_{E}+{\cal L}_X^E\zh\;.\ee This is a Loday algebroid bracket with left anchor $a(X+\zw)=a_E(X)$ and right anchor $$(\textsf{\em D}h)(X+\zw,Y+\zh)=d^E h\w i_Y\zw+a_E(X)(h)\zh\;,$$ see \cite{GKP}, Section 5.\medskip

If this Loday algebroid $E\oplus \w E^*$ is symmetric, Condition (\ref{LodS1}) is satisfied in particular for 0-forms, i.e. we have $$a_E(X)(h)g=a_E(Y)(h)f\;,$$ for any $f,g,h\in\Ci(M)$ and any $X,Y\in\zG E$. If we choose $f=0$ and $g=1$, we find that $a_E=0$, so that the considered Lie algebroid $E$ is a Lie algebra bundle ({\small LAB}). Conversely, if $E$ is a {\small LAB}, we get $(\textsf{\em D}h)(X+\zw,Y+\zh)=0$, since $(d^E h)(-)=a_E(-)(h)=0$; hence, the conditions (\ref{LodS1}) and (\ref{LodS2}) are satisfied and $E\oplus \w E^*$ is a symmetric Leibniz algebroid.\medskip


Let us close this section with the definition of symmetric Leibniz pseudoalgebras:

\begin{defi} A {\em symmetric Leibniz pseudoalgebra} is a Leibniz pseudoalgebra $(\cE,[-,$ $-],a)$ over some $(R,\cA)$, such that (\ref{S1}) and (\ref{S2}), or, equivalently, (\ref{S1b}) and (\ref{S2b}), are satisfied for all $f\in\cA$ and all $X,Y,Z\in\cE$. We denote by ${\tt SymLeiPsAlg}$ $\,(R,\cA)$ $\subset$ ${\tt LeiPsAlg}\,(R,\cA)$ the full subcategory of symmetric Leibniz pseudoalgebras. \end{defi}

\begin{ex}\label{ExSym2} An example of a symmetric Leibniz pseudoalgebra is the free symmetric Leibniz pseudoalgebra over an anchored module that we describe in Theorem \ref{Main2} and in Proposition \ref{Main3}. Observe that this symmetric Leibniz bracket is not Loday.\end{ex}

\section{Generalized Courant Algebroids}

\subsection{Generalized Courant Algebroids}

As mentioned before, one of the goals of this paper is to extend the concept of Courant algebroid and to construct the free object in the category of generalized Courant algebroids over any anchored vector bundle.

\begin{defi}\label{DPrCrtAld} Let $(\cE_1,[-,-],a)$ be a Leibniz pseudoalgebra over $(R,\cA)$ and let $(\cE_2,\zm^\ell,\zm^r)$ be an $\cE_1$-module. Assume further that $(-|-):\cE_1\times\cE_1\to\cE_2$ is a symmetric $\cA$-bilinear $\cE_2$-valued map, such that, for any $X,Y,Z\in\cE_1$, the `invariance relations' \be\label{LeftAct} \zm^\ell(X)(Y|Z)=([X,Y]|Z)+(Y|[X,Z])\;,\ee \be\label{RightAct} -\zm^r(X)(Y|Z)=([Y,Z]+[Z,Y]|X)\;\;\text{and}\ee \be\label{Equal}([X,Y]|Z)+(Y|[X,Z])=([Y,Z]+[Z,Y]|X)\;\ee hold true. We refer to such a tuple $(\cE_1,\cE_2,[-,-],(-|-), a,\zm^\ell,\zm^r)$ as a {\em generalized pre-Courant pseudoalgebra}.\end{defi}

\begin{rem}\label{Natural} In the geometric situation $\cE_1=\zG E_1$, where $E_1\to M$ is a vector bundle over a manifold, $R=\R$ and $\cA=\Ci(M)$, we can take $(\cE_2,\zm^\ell,\zm^r)=(\Ci(M),a,-a)$, which is actually an $E_1$-module. If we now assume in addition that $(-|-)$ is nondegenerate, the generalized pre-Courant pseudoalgebra is a classical Courant algebroid $(E_1,[-,-],$ $(-|-),a)$.\end{rem}

\begin{prop}\label{Motiv} Any generalized pre-Courant pseudoalgebra with nondegenerate scalar product $(-|-)$ is a Loday pseudoalgebra and a symmetric Leibniz pseudoalgebra.\end{prop}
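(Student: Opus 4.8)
The plan is to use the nondegeneracy of $(-|-)$ purely as a cancellation device: interpreting nondegeneracy as injectivity of the induced $\cA$-linear map $\cE_1\to\Hom_\cA(\cE_1,\cE_2)$, $W\mapsto(W|-)$, any identity between two elements of $\cE_1$ can be verified by pairing both sides against an arbitrary $Z\in\cE_1$. The starting point is the master relation
\[
\zm^\ell(X)(Y|Z)=(X|Y\circ Z)
\]
for all $X,Y,Z\in\cE_1$ (with $Y\circ Z=[Y,Z]+[Z,Y]$), which I obtain immediately by substituting (\ref{Equal}) into (\ref{LeftAct}) and using the symmetry of the scalar product. This identity is what lets the left action $\zm^\ell$ be traded for the symmetrized bracket inside scalar products; the relations (\ref{LeftAct}) and (\ref{Equal}) are the only invariance relations the argument will use.

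First I would produce the Loday structure. The natural candidate for the right-anchor defect is $T(f,X,Y):=[fX,Y]-f[X,Y]+a(Y)(f)X$, and the whole game is to compute $(T(f,X,Y)|Z)$ for every $Z$. I would split $[fX,Y]=(fX)\circ Y-[Y,fX]$, evaluate $[Y,fX]$ by the left Leibniz rule of Definition \ref{LeiPsAlg}, and evaluate $((fX)\circ Y\,|\,Z)$ by applying the master relation to $\zm^\ell(Z)(fX|Y)$ and pulling the $f$ out through the module Leibniz rule of Definition \ref{pseudoalgebramodule}. The derivation term $a(Z)(f)(X|Y)$ generated by the module Leibniz rule is exactly what survives all cancellations, leaving the clean formula $(T(f,X,Y)|Z)=a(Z)(f)(X|Y)$. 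Since the right-hand side is $\cA$-bilinear in $(X,Y)$ and a derivation in $f$ (as $a(Z)\in\Der\cA$), nondegeneracy forces the concrete element $T(f,X,Y)$ to be $\cA$-bilinear in $(X,Y)$ and to satisfy $T(fg,X,Y)=fT(g,X,Y)+gT(f,X,Y)$. Setting $(\textsf{D}f)(X,Y):=T(f,X,Y)$ then defines a derivation $\textsf{D}\colon\cA\to\Hom_\cA(\cE_1\otimes_\cA\cE_1,\cE_1)$, and (\ref{RightDiffProp}) holds by the very definition of $T$; this exhibits the object as a Loday pseudoalgebra.

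It remains to see that it is symmetric. From $(T(f,X,Y)|Z)=a(Z)(f)(X|Y)$ and the symmetry of the scalar product, $(\textsf{D}f)(X,Y)$ and $(\textsf{D}f)(Y,X)$ pair identically against every $Z$, which gives (\ref{LodS1}) by nondegeneracy. For (\ref{LodS2}), pairing against an arbitrary $W$ turns the left-hand side into $a(W)(f)(X|Y\circ Z)$ and the right-hand side into $a(W)(f)\big(([X,Y]|Z)+(Y|[X,Z])\big)$, and these coincide by the master relation; nondegeneracy again yields the identity. With (\ref{LodS1}) and (\ref{LodS2}) established, the proposition asserting their equivalence with (\ref{S1})--(\ref{S2}) — whose proof is purely algebraic and transfers verbatim from algebroids to pseudoalgebras — shows that the object is a symmetric Leibniz pseudoalgebra. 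I expect the main obstacle to be conceptual rather than computational: the bracket carries no a priori differentiability in its first slot, so the only handle on $[fX,Y]$ is through the scalar product, and the crux is recognizing that the master relation together with the module Leibniz rule already pins $[fX,Y]$ down up to nondegeneracy.
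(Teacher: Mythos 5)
Your proof is correct and follows essentially the same route as the paper's: both reduce the Loday identity to computing the pairing of the right-anchor defect $[fX,Y]-f[X,Y]+a(Y)(f)X$ against an arbitrary element by playing the invariance relations off the two Leibniz rules, and both then deduce (\ref{LodS1})--(\ref{LodS2}) from (\ref{Equal}). The only minor differences are that you define $\textsf{D}f$ concretely as this defect and invoke only the injectivity half of nondegeneracy, whereas the paper first produces $(\textsf{D}f)(Y,Z)$ abstractly as the element representing the functional $a(-)(f)(Y|Z)$ (which uses the surjectivity half), and that you phrase the key computation through $\zm^\ell$ and the symmetrized bracket where the paper phrases it through $\zm^r$ --- interchangeable on scalar products precisely because of (\ref{Equal}).
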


It is clear that, for any $X\in\cE_1$, we have $(X|-)\in\op{Hom}_\cA(\cE_1,\cE_2)$. By nondegenerate scalar product, we mean here that any $\zD\in\op{Hom}_\cA(\cE_1,\cE_2)$ reads $\zD=(X|-)$, and that the map $Y\mapsto (Y|-)$ is injective, so that $X$ is unique. Indeed, in the aforementioned geometric case, nondegeneracy in each fiber, see Definition \ref{CrtAld}, implies these requirements.

\begin{proof} We use the above notation; in particular $f\in\cA$ and $X,Y,Z\in\cE_1$. In view of the invariance relations and Leibniz rule for the left action, we get \be\label{R1}-\zm^r(X)(f(Y|Z))=f([Y,Z]+[Z,Y]|X)+a(X)(f)(Y|Z)\;.\ee On the other hand, the Leibniz rule for the bracket $[-,-]$ gives \be\label{R2}-\zm^r(X)(fY|Z)=([fY,Z]+[Z,fY]|X)=([fY,Z]|X)+(f[Z,Y]+a(Z)(f)Y|X)\;.\ee Note now that $a(-)(f)(Y|Z)\in\op{Hom}_\cA(\cE_1,\cE_2)$, so that there is a unique $(\textsf{\em D}f)(Y,Z)$ $\in\cE_1$ such that \be\label{R3}((\textsf{\em D}f)(Y,Z)|-)=a(-)(f)(Y|Z)\;.\ee The properties of the anchor and the scalar product imply that $\textsf{\em D}$ is a derivation $$\textsf{\em D}:\cA\to \op{Hom}_\cA(\cE_1\otimes_\cA\cE_1,\cE_1)\;.$$ It now follows from (\ref{R1}), (\ref{R2}), (\ref{R3}), and nondegeneracy that $$[fY,Z]=f[Y,Z]-a(Z)(f)Y+(\textsf{\em D}f)(Y,Z)\;,$$ i.e. that $(\cE_1,[-,-],a)$ is a Loday pseudoalgebra.\medskip

Furthermore, the latter is a symmetric Leibniz pseudoalgebra if and only if the conditions (\ref{LodS1}) and (\ref{LodS2}) are satisfied -- the proof in the geometric situation remains valid in the present algebraic case --. It is easily seen that these requirements are fulfilled due to the invariance relation (\ref{Equal}). \end{proof}

In view of Remark \ref{Natural}, it would now be natural to define generalized Courant pseudoalgebras as generalized pre-Courant pseudoalgebras with nondegenerate scalar product. Actually we choose a more general definition, see Proposition \ref{Motiv}:

\begin{defi}\label{DCrtAld} A {\em generalized Courant pseudoalgebra} is a symmetric generalized pre-Courant pseudoalgebra.\end{defi}

Generalized Courant pseudoalgebras are a full subcategory ${\tt CrtAld}$ of the category ${\tt PrCrtAld}$ of generalized pre-Courant pseudoalgebras.

\begin{defi} A {\em morphism} between two generalized (pre-)Courant pseudoalgebras $$(\cE_1,\cE_2,[-,-],(-|-),a,\zm^\ell,\zm^r)\;\;\text{and}\;\;(\cE'_1,\cE'_2,[-,-]',(-|-)',a',\zm'^\ell,\zm'^r)$$ over the same pair $(R,\cA)$, is a morphism $(\zf_1,\zf_2)$ from the $\cE_1$-module $\cE_2$ to the $\cE'_1$-module $\cE'_2$, such that \be\label{RespScProd}(-|-)'\,(\zf_1\times\zf_1)=\zf_2\,(-|-)\;.\ee\end{defi}

\subsection{Generalized Courant Algebroid Associated to a Symmetric Leibniz Algebroid}\label{Back}

The next theorem describes this Courant algebroid. It is also the motivation for the introduction of symmetric Leibniz algebroids. Moreover, it will turn out that the generalized Courant algebroid associated to a symmetric Leibniz algebroid is one of the two components of the free Courant algebroid.

\begin{theo}\label{Main} Let $(\cE,[-,-],a)$ be a symmetric Leibniz pseudoalgebra over $(R,\cA)$. Denote by $\odot$ the symmetric tensor product over $\cA$, take the subset \begin{equation}\label{Inv} \text{\em Inv}=\{[X,Y]\odot Z+Y\odot [X,Z]-X\odot([Y,Z]+[Z,Y]): X,Y,Z\in\cE\}\;\end{equation} of the $\cA$-module $\cE^{\odot 2}$, and let $\la\op{Inv}\ra$ be the $\cA$-submodule of $\cE^{\odot 2}$ generated by $\op{Inv}$. The quotient $${\cal R}(\cE)=\cE^{\odot 2}/\la\op{Inv}\ra$$ is an $\cE$-module with actions $\tilde{\zm}^\ell$ and $\tilde{\zm}^r$ induced by \be\label{LeftAct} \zm^\ell(X)(Y_1\odot Y_2)=[X,Y_1]\odot Y_2+Y_1\odot[X,Y_2]\ee and \be\label{RightAct} -\zm^r(X)(Y_1\odot Y_2)=(Y_1\c Y_2)\odot X\;.\ee These data, together with the {\em universal scalar product} \be\label{UnivScProd} (-|-): \cE\times\cE\ni (X,Y)\mapsto (X\odot Y)^{\;\widetilde{}}\in{\cal R}(\cE)\;,\ee define a generalized Courant pseudoalgebra \be\label{GenCrtAld}\cC(\cE):=(\cE,{\cal R}(\cE),[-,-],(-|-),a,\tilde{\zm}^\ell,\tilde{\zm}^r)\;.\ee\end{theo}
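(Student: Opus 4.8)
The plan is to check, one by one, the four ingredients demanded by Definitions \ref{DPrCrtAld} and \ref{DCrtAld}: that $({\cal R}(\cE),\tilde\zm^\ell,\tilde\zm^r)$ is a genuine $\cE$-module, that $(-|-)$ is a symmetric $\cA$-bilinear ${\cal R}(\cE)$-valued pairing, that the three invariance relations hold, and that the underlying Leibniz pseudoalgebra is symmetric. The last three are almost immediate. The pairing is symmetric and $\cA$-bilinear because it is the symmetric tensor product followed by the quotient projection; the underlying Leibniz pseudoalgebra is $\cE$ itself, which is symmetric by hypothesis, so Definition \ref{DCrtAld} is met once the pre-Courant axioms are; and on representatives the left and right invariance relations hold verbatim from the defining formulas of $\tilde\zm^\ell$ and $\tilde\zm^r$, while the equality (\ref{Equal}) holds because its two sides differ exactly by the class $([X,Y]\odot Z+Y\odot[X,Z]-X\odot(Y\circ Z))^{\;\widetilde{}}$ of a generator of $\la\op{Inv}\ra$, hence by zero. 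So the entire content is that ${\cal R}(\cE)$ is a well-defined $\cE$-module.

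First I would establish that the two raw actions are well defined on $\cE^{\odot 2}$. The map $\zm^\ell(X)$ is the canonical derivation extension of $\ad_X=[X,-]$; it is $\cA$-balanced, hence descends to $\cE^{\odot 2}$, precisely because of the Leibniz rule (\ref{axiom1}). For $\zm^r(X)$, the formula $Y_1\odot Y_2\mapsto-(Y_1\circ Y_2)\odot X$ descends from $\cE\times\cE$ to $\cE^{\odot 2}$ because $\circ$ is symmetric and, crucially, because the first symmetry condition (\ref{S1}) says exactly that $(fY_1)\circ Y_2=Y_1\circ(fY_2)$, which is the balancing needed over $\cA$. Note that $\zm^r(X)$ is then only $R$-linear, consistent with the fact that no differentiability is imposed on the right action.

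The decisive step is the descent to the quotient ${\cal R}(\cE)=\cE^{\odot 2}/\la\op{Inv}\ra$. For $\zm^\ell(X)$, applying it to a generator $[X_0,Y_0]\odot Z_0+Y_0\odot[X_0,Z_0]-X_0\odot(Y_0\circ Z_0)$ and expanding each double bracket by the Jacobi identity (\ref{eq:jacobi}) regroups the twelve resulting terms into the sum of the three generators obtained by replacing $X_0,Y_0,Z_0$ in turn by $[X,X_0],[X,Y_0],[X,Z_0]$; on $\cA$-multiples one uses $\zm^\ell(X)(fw)=f\zm^\ell(X)(w)+a(X)(f)\,w\in\la\op{Inv}\ra$ since $\la\op{Inv}\ra$ is an $\cA$-submodule. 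For $\zm^r$ the key algebraic fact is the identity $[U\circ V,W]=0$, valid in every left Leibniz algebra as an immediate consequence of (\ref{eq:jacobi}): a short Jacobi simplification shows that $\zm^r(X)$ sends the above generator to $[Y_0\circ Z_0,X_0]\odot X=0$. For $\cA$-multiples $f\,w$, since $\zm^r$ is not $\cA$-linear, I would compute directly that $\zm^r(X)(fw)=-T(f)\odot X$ with $T(f)=(f[X_0,Y_0])\circ Z_0+(fY_0)\circ[X_0,Z_0]-(fX_0)\circ(Y_0\circ Z_0)$; rewriting the first term by (\ref{S1}) and applying the second symmetry condition (\ref{S2}) in the form $[fX_0,Y_0\circ Z_0]=[X_0,Y_0]\circ(fZ_0)+(fY_0)\circ[X_0,Z_0]$ collapses $T(f)$ to $-[Y_0\circ Z_0,fX_0]$, which again vanishes. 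Thus $\tilde\zm^r$ descends, and this is the unique place where (\ref{S2}) is genuinely used.

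Finally the module axioms, which it suffices to verify on the classes $(U\odot V)^{\;\widetilde{}}$ since these $R$-span ${\cal R}(\cE)$. The Leibniz rule for $\tilde\zm^\ell$ is read off from (\ref{axiom1}) as above. Axiom (\ref{VWV}) is the statement that $X\mapsto\zm^\ell(X)$ is a Leibniz representation on $\cE^{\odot 2}$, which follows from (\ref{eq:jacobi}) because the commutator of the two derivation extensions is the derivation extension of $[[X,Y],-]$. Axiom (\ref{WVV}) follows from a two-line computation using the derivation property $[X,Y_1\circ Y_2]=[X,Y_1]\circ Y_2+Y_1\circ[X,Y_2]$, and axiom (\ref{VVW}) reduces, after the analogous cancellation, to the requirement $[Y_1\circ Y_2,x]\odot y=0$ in ${\cal R}(\cE)$, which is once more the identity $[U\circ V,W]=0$. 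The main obstacle is therefore conceptual rather than computational: one must isolate the left-Leibniz identity $[U\circ V,W]=0$, which is what makes the right action descend on representatives and what makes (\ref{VVW}) hold, and recognize that the compatibility of $\zm^r$ with the $\cA$-module structure of $\la\op{Inv}\ra$ is governed precisely by the two symmetry conditions (\ref{S1}) and (\ref{S2}); everything else is bookkeeping with (\ref{eq:jacobi}) and (\ref{axiom1}).
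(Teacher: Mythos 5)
Your proposal is correct and follows essentially the same route as the paper: well-definedness of the raw actions on $\cE^{\odot 2}$ via (\ref{axiom1}) and (\ref{S1}), the module axioms via the Jacobi identity and the vanishing of right adjoint actions on symmetrized brackets, descent of $\tilde{\zm}^\ell$ by showing generators of $\op{Inv}$ map to sums of generators, descent of $\tilde{\zm}^r$ via (\ref{S2}) together with $[U\circ V,W]=0$, and the invariance relations read off from the defining formulas with (\ref{Equal}) holding by construction of the quotient. Your explicit identification of $[U\circ V,W]=0$ and of (\ref{S1})--(\ref{S2}) as the two load-bearing facts matches exactly where the paper uses them.
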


\begin{ex} All the examples of symmetric Leibniz brackets described in Example \ref{ExSym1} and Example \ref{ExSym2} thus give rise to generalized Courant algebroids.\end{ex}

\begin{rem} The associated generalized Courant algebroid is a very natural construction over a symmetric Leibniz pseudoalgebra, whose scalar product is the universal scalar product given by the symmetric tensor product and whose actions are the `invariant' Courant actions.\end{rem}


We first prove the following

\begin{lem} The $\cA$-module $\cE^{\odot 2}$ is an $\cE$-module for the actions $\zm^\ell$ and $\zm^r$.\end{lem}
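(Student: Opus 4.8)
The plan is to check, against Definition~\ref{pseudoalgebramodule}, the few things that are not automatic. Since $\cE^{\odot2}$ is evidently an $\cA$-module, it remains to show that the two formulas descend to well-defined $R$-linear operators on $\cE^{\odot2}$, that the left action obeys the Leibniz rule, and that the bimodule identities (\ref{VVW}), (\ref{WVV}) and (\ref{VWV}) hold. I would carry these out in that order, evaluating everything on generators $Y_1\odot Y_2$.

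For well-definedness, fix $X\in\cE$. The map $(Y_1,Y_2)\mapsto[X,Y_1]\odot Y_2+Y_1\odot[X,Y_2]$ is $R$-bilinear and symmetric, and the Leibniz rule $[X,fY]=f[X,Y]+a(X)(f)Y$ shows it is $\cA$-balanced: both $(fY_1,Y_2)$ and $(Y_1,fY_2)$ yield $f\,\zm^\ell(X)(Y_1\odot Y_2)+a(X)(f)\,(Y_1\odot Y_2)$. It therefore factors through $\cE^{\odot2}$ and defines $\zm^\ell(X)$, and reading the same computation on a generator $w$ gives at once the required Leibniz rule $\zm^\ell(X)(fw)=f\,\zm^\ell(X)(w)+a(X)(f)\,w$. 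For the right action, the map $(Y_1,Y_2)\mapsto-(Y_1\circ Y_2)\odot X$ is $R$-bilinear and symmetric because $\circ$ is symmetric; the one non-formal point is its $\cA$-balancedness, which is exactly the first symmetric-Leibniz axiom (\ref{S1}): $(fY_1)\circ Y_2=Y_1\circ(fY_2)$. Hence it too descends and defines $\zm^r(X)$. This is the only place where the symmetric-Leibniz hypothesis is used.

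I would then verify the three module axioms. Identity (\ref{VWV}) is immediate: in $\zm^\ell(x)\zm^\ell(y)-\zm^\ell(y)\zm^\ell(x)$ the mixed terms cancel and each tensor slot is left with $[x,[y,Y_i]]-[y,[x,Y_i]]$, which the Jacobi identity (\ref{eq:jacobi}) rewrites as $[[x,y],Y_i]$, i.e. $\zm^\ell([x,y])$. For (\ref{WVV}) I would use the consequence $[x,Y_1\circ Y_2]=[x,Y_1]\circ Y_2+Y_1\circ[x,Y_2]$ of (\ref{eq:jacobi}): writing $S=Y_1\circ Y_2$, one finds $\zm^\ell(x)\zm^r(y)(Y_1\odot Y_2)=-[x,S]\odot y-S\odot[x,y]$ and $\zm^r(y)\zm^\ell(x)(Y_1\odot Y_2)=-[x,S]\odot y$, whose difference is $-S\odot[x,y]=\zm^r([x,y])(Y_1\odot Y_2)$.

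The identity (\ref{VVW}) is the crux. With $S=Y_1\circ Y_2$, its right-hand side $\zm^r(y)\zm^r(x)+\zm^\ell(x)\zm^r(y)$ evaluates on $Y_1\odot Y_2$ to $(S\circ x)\odot y-[x,S]\odot y-S\odot[x,y]$, while the left-hand side is $-S\odot[x,y]$; their difference is $(S\circ x-[x,S])\odot y=[S,x]\odot y$. Everything thus reduces to the vanishing of $[S,x]=[Y_1\circ Y_2,x]$, which holds in \emph{any} left Leibniz algebra: applying (\ref{eq:jacobi}) to rewrite $[[Y_1,Y_2],x]$ and $[[Y_2,Y_1],x]$ as iterated brackets and adding, the four double brackets cancel in pairs, so $[Y_1\circ Y_2,x]=0$. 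Hence (\ref{VVW}) holds and $\cE^{\odot2}$ is an $\cE$-module. I expect the genuinely delicate points to be precisely these two: the $\cA$-balancedness of $\zm^r$, where (\ref{S1}) is indispensable, and the cancellation $[Y_1\circ Y_2,x]=0$ underlying (\ref{VVW}); the remaining verifications are routine bilinear bookkeeping.
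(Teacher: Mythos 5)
Your proposal is correct and follows essentially the same route as the paper: well-definedness of $\zm^\ell$ via the Leibniz rule and of $\zm^r$ via the symmetry axiom (\ref{S1}), then the bimodule identities, with the key observation that $[Y_1\circ Y_2,x]=0$ in any left Leibniz algebra driving the verification of (\ref{VVW}). The only difference is that you write out all three module identities where the paper details only (\ref{VVW}); your identification of the two genuinely non-routine points matches the paper's emphasis exactly.
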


\begin{proof} (i) We first show that $\zm^\ell(X)$ and $\zm^r(X)$ are well-defined on $\cE^{\odot 2}$ (note that we do of course not intend to show that they are $\cA$-linear on $\cE^{\odot 2}$; indeed, they are visibly only $R$-linear). Since the {\small RHS}s of (\ref{LeftAct}) and (\ref{RightAct}) are symmetric in $Y_1,Y_2$, it suffices to prove that they respect the `defining relations' of the tensor product over $\cA$. The only nonobvious condition is that $Y_{1}\odot fY_{2}=(fY_{1})\odot Y_{2}$ be preserved. And indeed, we have

\begin{eqnarray*}
\zm^\ell(X)(Y_{1}\odot fY_{2})&=&[X,Y_{1}]\odot fY_{2}+Y_{1}\odot[X,fY_{2}]\\
&=&[X,Y_{1}]\odot fY_{2}+Y_{1}\odot f[X,Y_{2}]+Y_{1}\odot a(X)(f)Y_{2}\\
&=&(f[X,Y_{1}])\odot Y_{2}+(fY_{1})\odot[X,Y_{2}]+(a(X)(f)Y_{1})\odot Y_{2}\\
&=&[X,fY_{1}]\odot Y_{2}+(fY_{1})\odot[X,Y_{2}]\\
&=&\zm^\ell(X)((fY_{1})\odot Y_{2})\;
\end{eqnarray*}
and
\begin{eqnarray*}
\zm^r(X)(Y_{1}\odot fY_{2})&=&-(Y_1\c fY_2)\odot X\\
&=&-((fY_1)\c Y_2)\odot X\\
&=&\zm^r(X)((fY_{1})\odot Y_{2})\;.
\end{eqnarray*}

(ii) It remains to check the `Leibniz morphism conditions' (\ref{VVW}), (\ref{WVV}), and (\ref{VWV}), as well as the Leibniz rule (\ref{LeibRule}). The Leibniz rule $$\zm^\ell(X)(Y_1\odot fY_2)=f\zm^\ell(X)(Y_1\odot Y_2)+a(X)(f)(Y_1\odot Y_2)$$ is clear from (i). The morphism conditions are also straightforwardly checked. To verify for instance $$\zm^r[X,Z]=\zm^r(Z)\zm^r(X)+\zm^\ell(X)\zm^r(Z)\;,$$ note first that the right adjoint action $[-,X]$ on a symmetrized product vanishes: $$[[Y_1,Y_2]+[Y_2,Y_1],X]=[Y_1,[Y_2,X]]-[Y_2,[Y_1,X]]+[Y_2,[Y_1,X]]-[Y_1,[Y_2,X]]=0\;.$$ We now get
$$\zm^r[X,Z](Y_1\odot Y_2)=-(Y_1\c Y_2)\odot [X,Z]\;,$$
\begin{eqnarray*}
\zm^r(Z)\zm^r(X)(Y_1\odot Y_2)&=& -\zm^r(Z)((Y_1\c Y_2)\odot X)\\
&=& ((Y_1\c Y_2)\c X)\odot Z\\
&=& ([Y_1\c Y_2,X]+[X,Y_1\c Y_2])\odot Z\\
&=& [X,Y_1\c Y_2]\odot Z\;,
\end{eqnarray*}
and
\begin{eqnarray*} \zm^\ell(X)\zm^r(Z)(Y_1\odot Y_2)&=&-\zm^\ell(X)((Y_1\c Y_2)\odot Z)\\
&=&-[X,Y_1\c Y_2]\odot Z-(Y_1\c Y_2)\odot [X,Z]\;.
\end{eqnarray*}
Hence, the result.
\end{proof}

The symmetric $\cA$-bilinear map
$$
\la -|-\ra:
\cE^{\times 2}\ni (X,Y) \mapsto X\odot Y\in \cE^{\odot 2}\;
$$ satisfies
\be \label{U1} \zm^\ell(X)\la Y|Z\ra=\la[X,Y]|Z\ra+\la Y|[X,Z]\ra\ee and \be\label{U2}-\zm^r(X)\la Y|Z\ra=\la X|[Y,Z]+[Z,Y]\ra\;,\ee
which are similar to (\ref{Inv1}) and (\ref{Inv2}). Since (\ref{Inv3}) does however not hold in general, we consider the quotient $\cA$-module $${\cal R}(\cE)=\cE^{\odot 2}/\la\op{Inv}\ra\;.$$

\begin{lem} The $\cA$-module ${\cal R}(\cE)$ is an $\cE$-module for the actions $\tilde{\zm}^\ell$ and $\tilde{\zm}^r$ induced by $\zm^\ell$ and $\zm^r$.\end{lem}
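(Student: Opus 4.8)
The plan is to deduce the statement from the preceding lemma by showing that the two actions $\zm^\ell$ and $\zm^r$, which are already known to make $\cE^{\odot 2}$ an $\cE$-module, \emph{descend} to the quotient ${\cal R}(\cE)=\cE^{\odot 2}/\la\op{Inv}\ra$. The point is that the module axioms (\ref{VVW}), (\ref{WVV}), (\ref{VWV}) and the Leibniz rule (\ref{LeibRule}) are equalities of $R$-linear operators that already hold on $\cE^{\odot 2}$; since the induced operators satisfy $\tilde{\zm}^\ell(X)(\bar\omega)=\overline{\zm^\ell(X)(\omega)}$ and likewise for $\tilde{\zm}^r$, these identities pass verbatim to ${\cal R}(\cE)$ once the operators are well defined on it. So the whole task reduces to showing that the $\cA$-submodule $\la\op{Inv}\ra$ is stable under $\zm^\ell(X)$ and under $\zm^r(X)$ for every $X\in\cE$. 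I would fix a typical generator $g=[U,V]\odot W+V\odot[U,W]-U\odot(V\c W)$ of $\op{Inv}$, with $U,V,W\in\cE$, and check stability on such $g$ and its $\cA$-multiples.

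For the left action, I would first use the Leibniz rule $\zm^\ell(X)(fg)=f\,\zm^\ell(X)(g)+a(X)(f)\,g$ to reduce the verification to the bare generators $g$. Applying $\zm^\ell(X)$, which acts as $[X,-]$ on each factor of a symmetric tensor, and then using the Jacobi identity $[X,[U,V]]=[[X,U],V]+[U,[X,V]]$ together with its consequence $[X,V\c W]=[X,V]\c W+V\c[X,W]$, a direct rearrangement should give
$$\zm^\ell(X)(g)=g_1+g_2+g_3\;,$$
where $g_1,g_2,g_3$ are the three generators of $\op{Inv}$ obtained from $g$ by replacing $U$, $V$, $W$ respectively with $[X,U]$, $[X,V]$, $[X,W]$. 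Hence $\zm^\ell(X)(g)\in\la\op{Inv}\ra$. This step uses only the Jacobi identity and is routine.

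The hard part will be the right action, since $\zm^r(X)$ is only $R$-linear and obeys no Leibniz rule, so I cannot reduce to bare generators but must treat an arbitrary $\cA$-multiple $fg$. Pushing the scalar into the appropriate factor and applying $\zm^r(X)(P\odot Q)=-(P\c Q)\odot X$, I get $\zm^r(X)(fg)=-E\odot X$ with
$$E=[U,V]\c(fW)+V\c(f[U,W])-U\c(f(V\c W))\;.$$
The crux is to prove $E=0$. For this I would combine three inputs: condition (\ref{S1}) to relocate the scalar from one argument of a symmetrized product to the other; the identity $(fU)\c(V\c W)=[fU,V\c W]=[fU,V]\c W+V\c[fU,W]$, which follows from the vanishing of the right adjoint action on a symmetrized product (established in the previous lemma) together with the derivation property above; and finally the symmetric Leibniz condition (\ref{S2b}) applied to $(U,V,W)$. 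Chaining these collapses $E$ to $0$, so that $\zm^r(X)(fg)=0$; by $R$-linearity $\zm^r(X)$ then annihilates all of $\la\op{Inv}\ra$, and in particular preserves it.

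Once both actions are seen to descend, the induced operators $\tilde{\zm}^\ell(X)$ and $\tilde{\zm}^r(X)$ on ${\cal R}(\cE)$ are well defined, and by the remark above they inherit (\ref{VVW}), (\ref{WVV}), (\ref{VWV}) and (\ref{LeibRule}) directly from $\cE^{\odot 2}$; this yields that ${\cal R}(\cE)$ is an $\cE$-module. I expect the verification $E=0$ to be the genuine obstacle, as it is the only place where the full strength of the symmetric Leibniz conditions (\ref{S1}) and (\ref{S2b}) is required, whereas everything else is bookkeeping built on the Jacobi identity and the prior lemma.
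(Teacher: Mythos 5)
Your proposal is correct and follows essentially the same route as the paper: reduce the left action to bare generators via the Leibniz rule and apply the Jacobi identity, and for the right action use the vanishing of right adjoint actions on symmetrized brackets together with the symmetry conditions to show that $\zm^r$ annihilates $\cA$-multiples of the generators of $\op{Inv}$. The only cosmetic difference is that you distribute the scalar $f$ slightly differently and invoke (\ref{S1}) and (\ref{S2b}) where the paper applies (\ref{S2}) directly, which are equivalent by the earlier proposition.
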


\begin{proof} It suffices to show that the actions descend to the quotient; indeed, the induced maps then inherit the required properties.\medskip

(i) Left action. Let $I(X,Y,Z)$, or just $I$, be any element in $\op{Inv}\subset\cE^{\odot 2}$, and let $f\in\cA$ and $W\in\cE$. Since $$\zm^\ell(W)(f I)=f\zm^\ell(W)(I)+a(W)(f)I\;,$$ we have $\zm^\ell(W)\la\op{Inv}\ra\subset\la\op{Inv}\ra,$ if $\zm^\ell(W)\op{Inv}\subset\la\op{Inv}\ra.$ The latter actually holds true:

$$ \zm^\ell(W)I(X,Y,Z) $$ $$= [W,[X,Y]]\odot Z+[X,Y]\odot[W,Z]+[W,Y]\odot[X,Z]
+Y\odot [W,[X,Z]] $$ $$
-[W,X]\odot(Y\c Z)-X\odot([W,Y]\c Z)-X\odot(Y\c[W,Z])$$
$$ =[[W,X],Y]\odot Z+[X,[W,Y]]\odot Z
+[X,Y]\odot[W,Z]+[W,Y]\odot[X,Z] $$ $$ +
Y\odot[[W,X],Z]+Y\odot[X,[W,Z]]
-[W,X]\odot(Y\c Z)-X\odot([W,Y]\c Z)-X\odot(Y\c[W,Z])$$
$$=I([W,X],Y,Z)+I(X,[W,Y],Z)+I(X,Y,[W,Z])\;.
$$

(ii) Right action. In view of the annihilation of symmetrized products by right adjoint actions and due to the symmetry condition (\ref{S2}), $$[fX,Y\c Z]=[X,Y]\c fZ+(fY)\c [X,Z]\;,$$ we get

$$\zm^r(W)(f I(X,Y,Z))$$ $$=\zm^r(W)\left([X,Y]\odot fZ+(fY)\odot [X,Z]-(fX)\odot(Y\c Z)\right)$$ $$= -([X,Y]\c fZ+(fY)\c [X,Z]-[fX,Y\c Z]-[Y\c Z,fX])\odot W$$ $$=0\;.$$ \end{proof}

It follows from (\ref{U1}) and (\ref{U2}) that $(\cE,[-,-],a)$, $({\cal R}(\cE),\tilde{\zm}^\ell,\tilde{\zm}^r)$, and the symmetric $\cA$-bilinear `universal scalar product' $$(-|-):\cE^{\times 2}\ni (X,Y)\mapsto \la X|Y\ra^{\;\widetilde{}}\in {\cal R}(\cE)\;$$ define a generalized Courant pseudoalgebra. This completes the proof of Theorem \ref{Main}.

\section{Free Symmetric Leibniz Algebroid}

The free symmetric Leibniz algebroid is the second ingredient needed for the construction of the free Courant algebroid.

\subsection{Leibniz Pseudoalgebra Ideals}

\newcommand{\cF}{{\cal F}}

\begin{defi} Let $(\cE,[-,-],a)$ be a Leibniz pseudoalgebra over $(R,\cA)$. A {\em Leibniz pseudoalgebra ideal} is an $\cA$-submodule $\cI\subset\cE$, which is a two-sided Leibniz $R$-algebra ideal, i.e. $[\cI,\cE]\subset\cI$ and $[\cE,\cI]\subset\cI$, and which is contained in the kernel of the anchor, i.e. $\cI\subset\ker a$.\end{defi}

\begin{prop} The quotient of a Leibniz pseudoalgebra by a Leibniz pseudoalgebra ideal is a Leibniz pseudoalgebra for the induced bracket and anchor.\end{prop}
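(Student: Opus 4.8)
The plan is to show that both the anchor and the bracket descend to the quotient $\cE/\cI$ and that every defining axiom of a Leibniz pseudoalgebra is then inherited from $\cE$ through the projection. Write $\pi\colon\cE\to\cE/\cI$ for the canonical map and $\overline{X}:=\pi(X)$. Since $\cI$ is an $\cA$-submodule of $\cE$, the quotient $\cE/\cI$ carries a natural $\cA$-module structure for which $\pi$ is $\cA$-linear, and this is the module structure on which the Leibniz pseudoalgebra will be built.

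First I would check well-definedness of the two structure maps. Because $\cI\subset\ker a$, the anchor factors through $\pi$, giving an $\cA$-module morphism $\overline{a}\colon\cE/\cI\to\Der\cA$ with $\overline{a}\circ\pi=a$, its $\cA$-linearity being immediate from that of $a$. For the bracket, the two-sided ideal property is exactly what is needed: if $X-X'\in\cI$ and $Y-Y'\in\cI$, then
\[
[X,Y]-[X',Y']=[X-X',Y]+[X',Y-Y']\in[\cI,\cE]+[\cE,\cI]\subset\cI,
\]
so $[\overline{X},\overline{Y}]:=\overline{[X,Y]}$ is a well-defined $R$-bilinear bracket on $\cE/\cI$ satisfying $\pi[X,Y]=[\overline{X},\overline{Y}]$.

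It then remains to transfer the three axioms, and here the key observation is simply that $\pi$ is a surjection intertwining both structures, so each identity is obtained by lifting arbitrary classes to $\cE$, invoking the corresponding identity there, and projecting. Concretely, for the Jacobi identity I would lift $\overline{X},\overline{Y},\overline{Z}$ to $X,Y,Z$ and apply $\pi$ to \eqref{eq:jacobi}; for the Leibniz rule I would apply $\pi$ to $[X,fY]=f[X,Y]+a(X)(f)Y$, using $\cA$-linearity of $\pi$ together with $\overline{a}\circ\pi=a$ to obtain
\[
[\overline{X},f\overline{Y}]=f[\overline{X},\overline{Y}]+\overline{a}(\overline{X})(f)\,\overline{Y};
\]
and for the anchor-morphism condition I would apply $\overline{a}$ and use $\overline{a}[\overline{X},\overline{Y}]=a[X,Y]=[a(X),a(Y)]=[\overline{a}(\overline{X}),\overline{a}(\overline{Y})]$. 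Note that, because this last condition transfers directly from $\cE$, there is no need to assume the quotient module $\cE/\cI$ to be faithful.

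I do not expect a serious obstacle: the only places where the hypotheses do real work are the well-definedness of the bracket, which rests precisely on both inclusions $[\cI,\cE]\subset\cI$ and $[\cE,\cI]\subset\cI$, and the well-definedness of the anchor, which rests on $\cI\subset\ker a$. Everything else is a formal consequence of $\pi$ being a bracket- and anchor-preserving surjection, the verification of the Leibniz rule being the only computation that genuinely mixes the module and the bracket structures.
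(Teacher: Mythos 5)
Your argument is correct and is precisely the routine verification that the paper dispenses with by writing ``Obvious'': well-definedness of the bracket and anchor follows from the two-sided ideal property and $\cI\subset\ker a$, and each axiom transfers along the bracket- and anchor-preserving surjection $\pi$. There is nothing to add or correct.
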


\begin{proof} Obvious.\end{proof}


\begin{lem} Let $(\cE,[-,-],a)$ be a Leibniz pseudoalgebra over $(R,\cA)$, let \be\label{Ideal1} J_1=\{X\c fY-(fX)\c Y: f\in\cA,\, X,Y\in\cE\}\;\;\text{and}\ee \be\label{Ideal2} J_2=\{[fX,Y\c Z]-[X,Y]\c fZ-(fY)\c[X,Z]: f\in\cA,\, X,Y,Z\in\cE\}\;,\ee and denote by $\la J_1\ra$ $(\,$resp., $\la J_2\ra$$\,)$ the $\cA$-module generated by $J_1$ $(\,$resp., $J_2$$\,)$. The $\cA$-module $(J_1+J_2):=\la J_1\ra+\la J_2\ra$ is an ideal of the Leibniz pseudoalgebra $\cE$, so that the quotient $\cE/(J_1+J_2)$ inherits a symmetric Leibniz pseudoalgebra structure.\end{lem}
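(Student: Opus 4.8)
The plan is to check the three conditions defining a Leibniz pseudoalgebra ideal for $\cI:=\langle J_1\rangle+\langle J_2\rangle$: it is an $\cA$-submodule (by construction), it is contained in $\ker a$, and it is a two-sided Leibniz ideal; symmetry of the quotient is then automatic. Two observations carry the argument. First, since $a$ is a Leibniz morphism and the bracket on $\Der\cA$ is antisymmetric, $a(X\circ Y)=[a(X),a(Y)]+[a(Y),a(X)]=0$, so symmetrized products are anchored to zero. The $J_1$-generators are differences of symmetrized products and the $J_2$-generators are $[fX,Y\circ Z]$ (again anchored to zero, as $a(Y\circ Z)=0$) minus symmetrized products; hence all generators lie in $\ker a$, and so does the $\cA$-module $\cI$ they generate. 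Second, the Jacobi identity makes the left adjoint $[W,-]$ a derivation of both $[-,-]$ and $\circ$, while it makes the right adjoint $[-,W]$ annihilate symmetrized products, $[X\circ Y,W]=0$; consequently $[-,W]$ annihilates every generator of $J_1$ and, using $[[fX,Y\circ Z],W]=[fX,[Y\circ Z,W]]-[Y\circ Z,[fX,W]]=0$, every generator of $J_2$ as well.

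For the inclusion $[\cE,\cI]\subseteq\cI$ I would apply $[W,-]$ to a generator. Being a derivation of $[-,-]$ and $\circ$ and obeying $[W,fY]=f[W,Y]+a(W)(f)Y$, it turns a $J_1$- (resp. $J_2$-) generator into a sum of $J_1$- (resp. $J_2$-) generators, obtained by replacing each entry in turn by its $[W,-]$-image and the scalar $f$ by $a(W)(f)$; for example $[W,\,X\circ fY-(fX)\circ Y]$ is the sum of the three $J_1$-generators with data $([W,X],f,Y)$, $(X,f,[W,Y])$, $(X,a(W)(f),Y)$, and a similar four-term identity holds over $J_2$. Thus $[W,-]$ sends $J_1\cup J_2$ into $\cI$, and then all of $\cI$ into $\cI$ because $[W,g\eta]=g[W,\eta]+a(W)(g)\eta$ has both summands in $\cI$.

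The inclusion $[\cI,\cE]\subseteq\cI$ is the delicate point, since $[-,W]$ is not $\cA$-linear in its first argument and one cannot reduce it to generators directly. The device is the identity $[\xi,W]=\xi\circ W-[W,\xi]$. Here $[W,\xi]\in\cI$ by the inclusion just established, so it remains to prove $\xi\circ W\in\cI$. Writing $\xi=\sum_k g_kP_k$ with $P_k$ generators and using $R$-bilinearity of $\circ$, this reduces to $(g_kP_k)\circ W$; since $P_k\circ(g_kW)-(g_kP_k)\circ W$ is itself a $J_1$-generator, it reduces further to $P\circ V$ with $P$ a generator and $V\in\cE$. Finally $[P,V]=0$ by the second observation, so $P\circ V=[V,P]$, which lies in $\cI$ by the left-hand inclusion. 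This proves $\cI$ is a two-sided ideal.

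Once $\cI$ is known to be a Leibniz pseudoalgebra ideal, the preceding proposition yields that $\cE/(J_1+J_2)$ is a Leibniz pseudoalgebra, and it is symmetric because the combinations defining (\ref{S1}) and (\ref{S2}) are exactly the generators of $J_1$ and $J_2$ and therefore vanish in the quotient. The only real obstacle is the first-argument inhomogeneity of the bracket in the right-hand inclusion, and the whole difficulty is absorbed by the relation $[\xi,W]=\xi\circ W-[W,\xi]$, which trades the uncontrolled right adjoint for the symmetric product and the well-behaved left adjoint.
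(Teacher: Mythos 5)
Your proof is correct and follows essentially the same route as the paper: the left adjoint $[W,-]$ is a derivation of both $[-,-]$ and $\circ$ and so preserves the generators, the right adjoint annihilates symmetrized products and hence all generators, and the passage from generators to the $\cA$-module they span is handled by trading $[fQ,W]$ for symmetrized products via the $J_1$-relation $Q\circ fW-(fQ)\circ W\in J_1$. Your reformulation through $[\xi,W]=\xi\circ W-[W,\xi]$ is just a repackaging of the paper's four-term identity, so there is nothing substantive to add.
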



\begin{proof} The left adjoint action $[W,-]$, $W\in\cE$, satisfies the Leibniz rule with respect to the Leibniz bracket $(X,Y)\mapsto [X,Y]$, the symmetrized bracket $(X,Y)\mapsto X\c Y$, and the $\cA$-module structure $(f,X)\mapsto fX$. It follows that $$[W,X\c fY-(fX)\c Y]=$$ $$[W,X]\c fY+X\c f[W,Y]+X\c\, a(W)(f) Y-\;$$ $$(f[W,X])\c Y-\,(a(W)(f)X)\c Y-(fX)\c [W,Y]\in \la J_1\ra\;,$$ and similarly for $J_2$.

As for the right action $[-,W]$, $W\in\cE$, recall that it vanishes on every symmetrized bracket. Since the first term $[fX,Y\c Z]=[fX,Y]\c Z+Y\c[fX,Z]$ of an element of $J_2$ is symmetric as well, the sets $J_1$ and $J_2$ vanish under the right action.\medskip

For any $f\in\cA,$ $W\in\cE,$ and $Q\in J_{1},$ we have now $$Q\c fW-(fQ)\c W=[Q,fW]+[fW,Q]-[fQ,W]-[W,fQ]\in J_1\;,$$ with $[Q,fW]=0$, $[fW,Q]\in \la J_{1}\ra$, and \be\label{Ida}[W,fQ]=f[W,Q]+a(W)(f)Q\in \la J_{1}\ra\;.\ee Hence, \be\label{Idb}[fQ,W]\in\la J_1\ra\;.\ee

Similarly, if $Q\in J_{2}$, $$ W\c fQ-(fW)\c Q=[W,fQ]+[fQ,W]-[fW,Q]-[Q,fW]\in J_1\;,$$ with \be\label{Idc}[W,fQ]=f[W,Q]+a(W)(f)Q\in \la J_{2}\ra\;,\ee $[fW,Q]\in \la J_2\ra$, and $[Q,fW]=0$. Therefore, \be\label{Idd}[fQ,W]\in J_{1}+\la J_{2}\ra\;.\ee\medskip Equations (\ref{Ida}), (\ref{Idb}), (\ref{Idc}), and (\ref{Idd}) imply that the $\cA$-submodule $\la J_1\ra +\la J_2\ra\subset\cE$ is a Leibniz $R$-algebra ideal.\medskip

To see that $\la J_{1}\ra +\la J_{2}\ra$ is a Leibniz pseudoalgebra ideal, it now suffices to recall that $a$ is $\cA$-linear and that any symmetrized bracket belongs to $\ker a$.\end{proof}

\subsection{Free Leibniz Pseudoalgebra}



There exists a forgetful functor $$\op{For}:{\tt (Sym)LeiPsAlg}\,(R,\cA)\to {\tt AncMod}(\cA)\;.$$ We write for short $\cat{C} := \cat{(Sym)LeiPsAlg}\,(R,\cA)$ and $\cat{D} := \cat{AncMod}(\cA)$. Therefore, for any $\cM\in{\tt D}$, we can define the free (symmetric) Leibniz pseudoalgebra over $\cM$. It is made of an object $\op{F(S)}\cM\in {\tt C}$ and a ${\tt D}$-morphism $i:\cM\to \op{F(S)}\cM$, such that, for any object $\cE\in{\tt C}$ and any ${\tt D}$-morphism $\zf:\cM\to \cE$, there is a unique ${\tt C}$-morphism $\Phi:\op{F(S)}\cM\to \cE$, such that $\Phi\, i=\zf$ :

\begin{equation} \begin{tikzpicture}
 \matrix (m) [matrix of math nodes, row sep=3em, column sep=3em]
   { \cM & \op{F(S)}\cM  \\
       & \cE \\ };
 \path[->]
 (m-1-1) edge node[left] {$\zf$} (m-2-2)
 (m-1-1) edge node[auto] {$i$}(m-1-2)
 (m-1-2) edge node[right] {$\Phi$} (m-2-2);
\end{tikzpicture}
\end{equation}

We first recall the construction of the free Leibniz algebra over an $R$-module \cite{LP}. Let $V$ be an $R$-module and let $\overline{T} V=\bigoplus_{k\ge 1}V^{\otimes_{\!R}\,k}$ be the reduced tensor $R$-module over $V.$ The {\em universal Leibniz bracket} $[-,-]$ is defined by the requirement
$$
v_{1}\ot v_{2}\ot\cdots\ot v_{k}
=[v_{1},[v_{2},...,[v_{k-2},[v_{k-1},v_{k}]]...]]\;,
$$
$v_i\in V.$\medskip

For instance, $$[v_{1},v_{2}]=v_{1}\ot v_{2}\in V^{\otimes_{\!R}\,2}\;,$$ $$[v_1,v_2\ot v_3]=[v_1,[v_2,v_3]]=v_1\ot v_2\ot v_3\in V^{\ot_{\! R}\,3}\;,$$  $$[v_1\ot v_2,v_{3}]=[[v_1,v_2],v_3]=[v_1,[v_2,v_3]]-[v_2,[v_1,v_3]]=v_{1}\ot v_{2}\ot v_{3}-v_{2}\ot v_{1}\ot v_{3}\in V^{\ot_{\!R}\,3}\;.$$

\vspace{0.1mm}\begin{theo}\label{Main2} Let $(\cM,a)$ be an anchored $\cA$-module. The free Leibniz $(R,\cA)$-pseudo-algebra $\op{F}\cM$ over $\cM$ is the triple $(\overline{T}\cM,[-,-]_{\op{Lei}},\op{F}a)$, where $\overline{T}\cM$ is endowed with the $\cA$-module structure defined inductively by \be\label{ModStr}
f(m_{1}\ot m_{2}\ot \ldots \ot m_{n}):=
m_{1}\ot f(m_{2}\ot\ldots \ot m_{n})
-a(m_{1})(f)(m_{2}\ot\ldots \ot m_{n})\;
\ee $(f\in\cA, m_i\in\cM)$, where $[-,-]_{\op{Lei}}$ is the universal Leibniz bracket on $\overline{T}\cM$, and where $$\op{F}a:\overline{T}\cM\to \op{Der}\cA$$ is induced by $a:\cM\to\op{Der}\cA$ .
\end{theo}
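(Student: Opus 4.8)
The plan is to build the structure in layers, starting from the free Leibniz $R$-algebra and then grafting on the $\cA$-module structure and the anchor, before finally checking the universal property. First I would invoke \cite{LP}: forgetting the $\cA$-module structure and regarding $\cM$ merely as an $R$-module, the pair $(\ol{T}\cM,[-,-]_{\op{Lei}})$ is the free Leibniz $R$-algebra on $\cM$, with the universal property that any $R$-linear map $\cM\to L$ into a Leibniz $R$-algebra $L$ extends uniquely to a Leibniz $R$-algebra morphism $\ol{T}\cM\to L$. All remaining verifications will be carried out by induction on the tensor degree, the degree-one layer being governed by the original data $(\cM,a)$, and the inclusion $i\colon\cM\to\ol{T}\cM$ of degree one being the unit of the adjunction.

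Next I would check that formula (\ref{ModStr}) really defines an $\cA$-module structure. Writing a degree-$n$ tensor as $m_1\ot w$ with $w$ of degree $n-1$, the right-hand side $m_1\ot(fw)-a(m_1)(f)\,w$ is $R$-bilinear in $(m_1,w)$ (using that $a$ is $R$-linear and, inductively, that $f$ acts $R$-linearly in lower degree), so it descends to $\cM^{\ot_R n}$. Unitality is immediate since $a(m_1)(1)=0$. The one genuinely structural point is associativity $(fg)x=f(gx)$: expanding both sides with (\ref{ModStr}) and invoking the inductive associativity in degree $n-1$, the two expressions agree precisely because $a(m_1)(fg)=a(m_1)(f)\,g+f\,a(m_1)(g)$, that is, because $a(m_1)$ is a derivation of $\cA$. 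This is the key place where the anchored-module hypothesis enters, and I expect the bookkeeping of the nested $\cA$-actions here to be the most error-prone part.

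I would then define $\op{F}a$ as the unique Leibniz $R$-algebra morphism $\ol{T}\cM\to\Der\cA$ extending $a$, using $\Der\cA$ as a Lie, hence Leibniz, algebra; this immediately yields the anchor-morphism condition $\op{F}a[X,Y]=[\op{F}a(X),\op{F}a(Y)]$. Its $\cA$-linearity is proved by induction: applying (\ref{ModStr}) and the morphism property, the computation collapses thanks to the identity $[a(m_1),f\,\op{F}a(w)]=a(m_1)(f)\,\op{F}a(w)+f[a(m_1),\op{F}a(w)]$ in $\Der\cA$. It then remains to establish the Leibniz rule $[X,fY]=f[X,Y]+\op{F}a(X)(f)\,Y$, which by construction of (\ref{ModStr}) already holds for $X=m_1\in\cM$ of degree one. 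For the inductive step it suffices, by $R$-linearity in $X$, to treat generators $X=[W,X']$ with $W,X'$ of strictly lower degree; applying the Jacobi identity (\ref{eq:jacobi}) to rewrite $[[W,X'],fY]=[W,[X',fY]]-[X',[W,fY]]$, then the inductive Leibniz rule for $W$ and $X'$, and recombining, the two mixed terms cancel, the derivation contributions assemble into $(\op{F}a(W)\op{F}a(X')-\op{F}a(X')\op{F}a(W))(f)\,Y=\op{F}a([W,X'])(f)\,Y$ via the anchor-morphism property, and the remaining terms give $f[[W,X'],Y]$ by a second use of Jacobi. This closes the induction and shows the triple is a Leibniz pseudoalgebra; the intertwining of Jacobi with the anchor here, together with the associativity check, is the technical heart of the proof.

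Finally, for the universal property, given a Leibniz pseudoalgebra $\cE$ and an anchored-module morphism $\zf\colon\cM\to\cE$, Step~1 produces a unique Leibniz $R$-algebra morphism $\Phi\colon\ol{T}\cM\to\cE$ with $\Phi\,i=\zf$. I would verify that $\Phi$ is $\cA$-linear by induction: using (\ref{ModStr}), the morphism identity $\Phi(m_1\ot fw)=[\zf(m_1),\Phi(fw)]$, the inductive hypothesis $\Phi(fw)=f\Phi(w)$, the Leibniz rule in $\cE$, and the anchor-compatibility $a_\cE(\zf(m_1))=a(m_1)$, the two correction terms cancel and leave $\Phi(f(m_1\ot w))=f\Phi(m_1\ot w)$. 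Compatibility of $\Phi$ with the anchors holds because $a_\cE\circ\Phi$ and $\op{F}a$ are two Leibniz $R$-algebra morphisms into $\Der\cA$ agreeing with $a$ on $\cM$, hence equal by the uniqueness in Step~1. Thus $\Phi$ is a morphism of Leibniz pseudoalgebras, and its uniqueness is inherited from that of the underlying Leibniz $R$-algebra morphism, since any such pseudoalgebra morphism is already determined on the degree-one generators by $\zf$.
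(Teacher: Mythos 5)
Your proposal is correct and follows essentially the same route as the paper: induction on tensor degree to establish the $\cA$-module structure (with associativity of the action resting on $a(m_1)$ being a derivation), the $\cA$-linearity of $\op{F}a$, the Leibniz rule via the Jacobi identity, and finally the universal property. The only (harmless) variation is that you obtain anchor-compatibility of the extension $\Phi$ from the uniqueness clause of the free Leibniz $R$-algebra rather than by a separate induction, which is a slightly cleaner way to finish the same argument.
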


\begin{proof} We denote by $\op{F}^n\cM=\cM^{\ot_{\! R}\,n}$ (resp., $\op{F}_n\cM=\bigoplus_{1\le k\le n}\cM^{\ot_{\!R}\,k}$) the grading (resp., the filtration) of $\op{F}\cM$.\medskip

\newcommand{\mf}{\mathbf{m}}

(i) Module structure. Equation (\ref{ModStr}) provides a well-defined $\cA$-module structure on $\op{F}_{n}\cM$, if we are given a well-defined $\cA$-module structure on $\op{F}_{n-1}\cM$, $n\ge 2$. Since the {\small RHS} of (\ref{ModStr}) is $R$-multilinear, the `action' is well-defined from $\op{F}^n\cM$ into $\op{F}_n\cM$. We extend it by linearity to $\op{F}_n\cM=\op{F}^n\cM\,\oplus\, \op{F}_{n-1}\cM$. It is now straightforwardly checked that this extension satisfies all the $\cA$-module requirements, except, maybe, the condition $f(g\zm)=(fg)\zm$, where $f,g\in\cA$ and $\zm\in\op{F}_n\cM$. As for the latter, note first that, if $f,g\in\cA$, $m,m_i\in\cM$, and $\mf=m_2\ot\ldots\ot m_n$, we have $$f(m\ot g\mf)=m\ot f(g\mf)-a(m)(f)(g\mf)\;,$$ since $g\mf$ is a finite sum $g\mf=\sum_{i\le n-1}\mf_i$, where $\mf_i\in\op{F}^{i}\cM$ is a decomposed tensor. Thus,
\begin{eqnarray*}
f(g(m\ot\mf))&=&f(m\ot g\mf)-f(a(m)(g)\mf)\\
&=&m\ot (fg)\mf-(a(m)(f)g)\mf-(fa(m)(g))\mf\\
&=&m\ot (fg)\mf-a(m)(fg)\mf\\
&=&(fg)(m\ot\mf)\;.
\end{eqnarray*}
\noindent The $\cA$-module structures on the filters $\op{F}_n\cM$, $n\ge 1$, naturally induce an $\cA$-module structure on $\op{F}\cM$.\medskip


(ii) Universal anchor map. Since $\op{F}\cM$ is the free Leibniz algebra over $\cM$, the map $a:\cM\to\op{Der}\cA$ factors through the inclusion $\cM\to \op{F}\cM$:
$$
\begin{CD}
\cM @>{}>> \op{F}\cM \\
@| @V{\op{F}a}VV \\
\cM@>{a}>>\op{Der}\cA
\end{CD}
$$
The Leibniz algebra morphism $\op{F}a$ is actually $\cA$-linear. Indeed, in view of the decomposition $f\mf=\sum_{i\le n-1}\mf_i$, where $\mf_i\in\op{F}^{i}\cM$ is a decomposed tensor, we obtain first $m\ot f\mf=[m,f\mf]$, where the notation is the same as above. Since, by induction, $\op{F}a$ is $\cA$-linear on $\op{F}^{n-1}\cM$, we then have
\begin{eqnarray*}
\op{F}a(f(m\ot \mf))&=&\op{F}a(m\ot f\mf)-\op{F}a(a(m)(f)\mf)\\
&=&\op{F}a[m,f\mf]-a(m)(f)\op{F}a(\mf)\\
&=&[a(m),f \op{F}a(\mf)]-a(m)(f)\op{F}a(\mf)\\
&=&f[a(m),\op{F}a(\mf)]\\
&=&f\op{F}a(m\ot \mf)\;.
\end{eqnarray*}

(iii) Leibniz pseudoalgebra conditions. To see that $(\op{F}\cM,[-,-]_{\op{Lei}},\op{F}a)$ (in the following we omit subscript $\op{Lei}$) is a Leibniz pseudoalgebra, it now suffices to check that (\ref{axiom1}) is satisfied. We proceed by induction and assume that, for any $f\in\cA$, $\mf\in \op{F}_{n-1}\cM$ and $\mf'\in \op{F}\cM$, $n\ge 2$, the bracket $[\mf,f\mf']$ satisfies Condition (\ref{axiom1}). Indeed, for $n=2$, we have $$[m,f\mf']=m\ot f\mf'=f(m\ot \mf')+a(m)(f)\mf'=f[m,\mf']+\op{F}a(m)(f)\mf'\;.$$ It is easily seen that (\ref{axiom1}) is then also satisfied in $\op{F}_n\cM$:
\bea
[m\ot\mf, f\mf']
&=&[[m,\mf],f\mf']\\
&=&[m,[\mf,f\mf']]-[\mf,[m,f\mf']]\\
&=&[m,f[\mf,\mf']]+[m,\op{F}a(\mf)(f)\mf']
-[\mf,f[m,\mf']]-[\mf,a(m)(f)\mf']\\
&=&\cdots\\
&=&f[m,[\mf,\mf']]-f[\mf,[m,\mf']]
+[\op{F}a(m),\op{F}a(\mf)](f)\mf'\\
&=&f[m\ot \mf,\mf']+\op{F}a(m\ot\mf)(f)\mf'.
\eea

(iv) Freeness. It remains to show that the Leibniz $(R,\cA)$-pseudoalgebra $(\op{F}\cM,[-,$ $-],\op{F}a)$ (we omit $\op{Lei}$), together with the anchored $\cA$-module morphism $i:\cM\hookrightarrow \op{F}\cM$, is free. Let thus $(\cE,[-,-]',a')$ be any Leibniz $(R,\cA)$-pseudoalgebra and let $\zf:\cM\to \cE$ be any anchored $\cA$-module morphism. Since $(\op{F}\cM,[-,-],i)$ is the free Leibniz $R$-algebra over the $R$-module $\cM$, the $R$-linear map $\zf$ extends uniquely to a Leibniz $R$-algebra map $\op{F}\zf:\op{F}\cM\to\cE$. When assuming that $a'\op{F}\zf=\op{F}a$ (resp., that $\op{F}\zf$ is $\cA$-linear) on $\op{F}_{n-1}\cM$, the usual proof based on the observation that $m\ot f\mf=[m,f\mf]$ (resp., on this observation combined with (\ref{ModStr}) and (\ref{axiom1})) allows to see that the same property holds on $F_n\cM$.
\end{proof}

\subsection{Free Symmetric Leibniz Pseudoalgebra}

\begin{proposition}\label{Main3} Let $J_1$, $J_2$ be the ideals (\ref{Ideal1}) and (\ref{Ideal2}) associated to the free Leibniz $(R,\cA)$-pseudoalgebra $(\op{F}\cM,[-,-]_{\op{Lei}},\op{F}a)$ over an anchored $\cA$-module $(\cM,a)$. The quotient $\op{FS}\cM:=\op{F}\cM/(J_{1}+J_{2})$, with induced bracket, anchor and `inclusion', is the free symmetric Leibniz pseudoalgebra over the anchored module $\cM$.\end{proposition}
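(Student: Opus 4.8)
The plan is to read off the universal property of $\op{FS}\cM$ from that of the free Leibniz pseudoalgebra $\op{F}\cM$ (Theorem~\ref{Main2}), using the preceding Lemma, which already guarantees that $\op{F}\cM/(J_1+J_2)$ is a symmetric Leibniz pseudoalgebra. Write $\pi\colon\op{F}\cM\to\op{FS}\cM$ for the canonical projection; since the bracket and anchor on the quotient are the induced ones, $\pi$ is a Leibniz pseudoalgebra morphism, and the composite $j:=\pi\circ i\colon\cM\to\op{FS}\cM$ is an anchored module morphism — this is the `inclusion' of the statement. Thus the symmetric structure and the structural map $j$ are already in place, and the whole content of the proposition is the freeness of $(\op{FS}\cM,j)$ in the subcategory ${\tt SymLeiPsAlg}\,(R,\cA)$.

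For existence, let $(\cE,[-,-]',a')$ be any symmetric Leibniz pseudoalgebra and $\zf\colon\cM\to\cE$ any anchored module morphism. Regarding $\cE$ merely as a Leibniz pseudoalgebra, Theorem~\ref{Main2} provides a unique Leibniz pseudoalgebra morphism $\op{F}\zf\colon\op{F}\cM\to\cE$ with $\op{F}\zf\circ i=\zf$. The key verification is that $\op{F}\zf$ annihilates $J_1+J_2$. As $\op{F}\zf$ is $\cA$-linear and respects the bracket, hence the symmetrized product $\c$, it carries the generators of $J_1$ and $J_2$ to the left-hand sides of the symmetry conditions (\ref{S1}) and (\ref{S2}) evaluated on their images:
\begin{align*}
\op{F}\zf\bigl(X\c fY-(fX)\c Y\bigr) &= \op{F}\zf(X)\c f\,\op{F}\zf(Y)-\bigl(f\,\op{F}\zf(X)\bigr)\c\op{F}\zf(Y)=0\;,\\
\op{F}\zf\bigl([fX,Y\c Z]-[X,Y]\c fZ-(fY)\c[X,Z]\bigr) &=0\;,
\end{align*}
both vanishing because $\cE$ satisfies (\ref{S1}) and (\ref{S2}). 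Being $\cA$-linear, $\op{F}\zf$ then also vanishes on the generated $\cA$-submodules $\la J_1\ra$ and $\la J_2\ra$, hence on $J_1+J_2=\la J_1\ra+\la J_2\ra$. Therefore $\op{F}\zf$ factors through the quotient: there is a unique $\cA$-linear map $\Psi\colon\op{FS}\cM\to\cE$ with $\op{F}\zf=\Psi\circ\pi$. Because $\pi$ is a surjective Leibniz pseudoalgebra morphism and the bracket and anchor on $\op{FS}\cM$ are induced, a one-line diagram chase over elements $\pi X,\pi Y$ shows that $\Psi$ respects bracket and anchor; as ${\tt SymLeiPsAlg}$ is a full subcategory, $\Psi$ is a morphism of symmetric Leibniz pseudoalgebras. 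Finally $\Psi\circ j=\Psi\circ\pi\circ i=\op{F}\zf\circ i=\zf$.

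For uniqueness, suppose $\Psi'\colon\op{FS}\cM\to\cE$ also satisfies $\Psi'\circ j=\zf$. Then $\Psi'\circ\pi\colon\op{F}\cM\to\cE$ is a Leibniz pseudoalgebra morphism with $(\Psi'\circ\pi)\circ i=\zf$, so the uniqueness clause of Theorem~\ref{Main2} gives $\Psi'\circ\pi=\op{F}\zf=\Psi\circ\pi$; since $\pi$ is surjective, $\Psi'=\Psi$.

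The argument is the standard `free object modulo relations' reflection, so no step is genuinely difficult. The point requiring care is the factorization through $\pi$: one must be sure that $J_1+J_2$ sits inside the kernel of $\op{F}\zf$ for every symmetric target $\cE$, and this is exactly what the displayed computation secures — the generators of $J_1$ and $J_2$ were designed to be the expressions measuring the failure of (\ref{S1}) and (\ref{S2}), written entirely in terms of the $\cA$-action and the symmetrized bracket, both of which $\op{F}\zf$ preserves. The only mild subtlety worth flagging is that the `inclusion' $j=\pi\circ i$ need not be injective, so $\op{FS}\cM$ is a genuine quotient; injectivity of $j$ is neither asserted nor needed.
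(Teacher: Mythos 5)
Your proposal is correct and follows essentially the same route as the paper: both rely on the preceding Lemma to equip the quotient with a symmetric Leibniz pseudoalgebra structure, use the universal property of $\op{F}\cM$ from Theorem~\ref{Main2} to produce $\op{F}\zf$, show that $\op{F}\zf$ kills $J_1+J_2$ because it preserves the symmetrized product and is $\cA$-linear while $\cE$ satisfies (\ref{S1}) and (\ref{S2}), and deduce uniqueness by lifting any competing morphism along the surjection $\pi$ and invoking the uniqueness clause of Theorem~\ref{Main2}. Your write-up is somewhat more explicit about the generator computation and the passage to the generated $\cA$-submodules, but there is no substantive difference.
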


\begin{rem} The free symmetric Leibniz pseudoalgebra over an anchored module $(\cM,a)$ is the natural quotient of the free Leibniz pseudoalgebra over $(\cM,a)$. The latter is the reduced tensor $R$-module $\op{F}\cM=\overline{T}\cM$ over $\cM$, endowed with an $\cA$-module structure that encodes the anchor $a$, the universal Leibniz bracket $[-,-]_{\op{Lei}}$, and the induced anchor $\op{F}a$. In the geometric case, when $\cM=\zG(M)$, with $M\to B$ an anchored vector bundle over a manifold, the module $\overline{T}\cM$ is not a space of sections, since the tensor product in $\overline{T}\cM$ is over $R$.\end{rem}

\begin{proof} We characterize the classes and the mentioned induced data by the symbol `tilde'. It has already been said that $(\op{FS}\cM,[-,-]^{\,\widetilde{}}_{\op{Lei}},(\op{F}a)^{\;\widetilde{}}\;)$ is a symmetric Leibniz pseudoalgebra (as usual we will omit $\op{Lei}$). On the other hand, it is clear from the definition of $(\op{F}a)^{\;\widetilde{}}$ that $\tilde \imath:\cM\ni m\to \tilde m\in\op{FS}\cM$ is an anchored $\cA$-module map.

As for freeness, let $(\cE,[-,-]',a')$ be a symmetric Leibniz pseudoalgebra. Any anchored module map $\zf:\cM\to \cE$ uniquely extends to a Leibniz pseudoalgebra map $\op{F}\zf:\op{F}\cM\to\cE$. To see that $\op{F}\zf$ descends to $\op{FS}\cM$, it suffices to show that it vanishes on $J_1$ and $J_2$. Observe first that, for any $\zm,\zn\in\op{F}\cM$, we have $$\op{F}\zf(\zm\circ\zn)=\op{F}\zf(\zm)\circ'\op{F}\zf(\zn)\;.$$ It follows now from the $\cA$-linearity of $\op{F}\zf$ and the symmetry of $\cE$ that $\op{F}\zf$ annihilates $J_1$ and $J_2$. It is also straightforwardly checked that the induced map $(\op{F}\zf)^{\;\widetilde{}}:\op{FS}\cM\to\cE$ is a map of Leibniz pseudoalgebras such that $(\op{F}\zf)^{\;\widetilde{}}\;\tilde\imath=\zf$. As for uniqueness of this extension, note that any Leibniz pseudoalgebra morphism $\op{FS}\zf:\op{FS}\cM\to\cE$ that extends $\zf$, implements a Leibniz pseudoalgebra morphism $$(\op{FS}\zf)^{\;\bar{}}:\op{F}\cM\ni \zm\mapsto \op{FS}\zf(\tilde{\zm})\in\cE$$ that extends $\zf$; hence, $(\op{FS}\zf)^{\;\bar{}}=\op{F}\zf$ and $\op{FS}\zf=(\op{F}\zf)^{\;\widetilde{}}$.\end{proof}


\section{Free Courant Algebroid}\label{Repr}

There is a forgetful functor $\op{For}:{\tt CrtAld}\to {\tt AncMod}$ between the categories of generalized Courant $(R,\cA)$-pseudoalgebras and anchored $\cA$-modules.

\begin{theo}\label{Main4} The free Courant algebroid over an anchored module $(\cM,a)$ is the generalized Courant pseudoalgebra $${\cal C}(\op{FS}\cM)=(\op{FS}\cM,{\cal R}(\op{FS}\cM),[-,-]^{\;\widetilde{}}_{\op{Lei}},(-|-),(\op{F}a)^{\;\widetilde{}},\tilde{\zm}^\ell,\tilde{\zm}^r)\;,$$ associated to the free symmetric Leibniz pseudoalgebra over $\cM$, together with the anchored module map $\tilde{\imath}:\cM\to \op{FS}\cM$. More precisely, for any generalized
Courant pseudoalgebra $${\cal C}=(\cE_1,\cE_2,[-,-]',(-|-)',a',\zm'^\ell,\zm'^r)$$ and any anchored module map $\zf:\cM\to\cE_1$, there exists a unique morphism of generalized Courant pseudoalgebras $(\zf_1,\zf_2)$ from ${\cal C}(\op{FS}\cM)$ to $\cal C$, such that $\zf_1\,\tilde{\imath}=\zf$.\end{theo}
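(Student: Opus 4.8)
The plan is to exhibit $\cC(\op{FS}\cM)$ as the composite of two universal constructions and to build the pair $(\zf_1,\zf_2)$ componentwise. The guiding observation is that, by Definition \ref{DCrtAld}, a generalized Courant pseudoalgebra is a \emph{symmetric} generalized pre-Courant pseudoalgebra, so its first component $(\cE_1,[-,-]',a')$ is a symmetric Leibniz pseudoalgebra; hence the forgetful functor $\op{For}:{\tt CrtAld}\to{\tt AncMod}$ factors through the forgetful functor ${\tt SymLeiPsAlg}\,(R,\cA)\to{\tt AncMod}(\cA)$, for which $\op{FS}$ is a left adjoint by Proposition \ref{Main3}. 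So the first step is to apply Proposition \ref{Main3}: the anchored module map $\zf:\cM\to\cE_1$ extends uniquely to a symmetric Leibniz pseudoalgebra morphism $\zf_1:=\op{FS}\zf:\op{FS}\cM\to\cE_1$ with $\zf_1\,\tilde{\imath}=\zf$. Because ${\tt SymLeiPsAlg}\,(R,\cA)$ is a full subcategory of ${\tt LeiPsAlg}\,(R,\cA)$, this $\zf_1$ is exactly the kind of map that the first component of a Courant morphism must be, and it is already pinned down uniquely.

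Next I would construct $\zf_2:{\cal R}(\op{FS}\cM)\to\cE_2$. Compatibility with the scalar products (\ref{RespScProd}) forces $\zf_2$ on the image of the universal scalar product, namely $\zf_2\big((X\odot Y)^{\;\widetilde{}}\big)=(\zf_1 X|\zf_1 Y)'$. To make this precise I first consider the map $(X,Y)\mapsto(\zf_1 X|\zf_1 Y)'$ on $\op{FS}\cM\times\op{FS}\cM$; since $(-|-)'$ is $\cA$-bilinear and symmetric and $\zf_1$ is $\cA$-linear, it respects all defining relations of the symmetric tensor product over $\cA$ and thus descends to an $\cA$-linear map $\psi:(\op{FS}\cM)^{\odot 2}\to\cE_2$.

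The key step is to show that $\psi$ annihilates the submodule $\la\op{Inv}\ra$, so that it descends to ${\cal R}(\op{FS}\cM)$. On a generator $[X,Y]\odot Z+Y\odot[X,Z]-X\odot(Y\c Z)$ of $\op{Inv}$, using that $\zf_1$ is a Leibniz morphism (so it respects both the bracket and its symmetrization), $\psi$ returns exactly $([\zf_1 X,\zf_1 Y]'|\zf_1 Z)'+(\zf_1 Y|[\zf_1 X,\zf_1 Z]')'-(\zf_1 X|\zf_1 Y\c'\zf_1 Z)'$, which vanishes by the invariance relation (\ref{Equal}) holding in the target $\cC$. As $\psi$ is $\cA$-linear and $\la\op{Inv}\ra$ is the $\cA$-module generated by these elements, $\psi$ kills $\la\op{Inv}\ra$ and induces the desired $\cA$-linear map $\zf_2$. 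This is the heart of the argument and is precisely the reason ${\cal R}(\cE)$ was defined in Theorem \ref{Main} as a quotient by $\op{Inv}$: it is the universal recipient of the symmetric square for which relation (\ref{Equal}) is enforced.

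It then remains to verify that $(\zf_1,\zf_2)$ is a morphism of generalized Courant pseudoalgebras and is unique. Compatibility with scalar products (\ref{RespScProd}) holds by construction. For the action conditions (\ref{RespActs}) I note that the classes $(X\odot Y)^{\;\widetilde{}}$ generate ${\cal R}(\op{FS}\cM)$ as an $R$-module (every element of the symmetric square is a finite sum of such, using $f(X\odot Y)=(fX)\odot Y$), and that both sides of each identity are $R$-linear in the module variable, so it suffices to test on these generators: the left-action identity follows from the invariance relation (\ref{LeftAct}) in $\cC$ together with the defining formula for $\tilde{\zm}^\ell$, and the right-action identity follows from (\ref{RightAct}) in $\cC$, the defining formula for $\tilde{\zm}^r$, and $\zf_1(X\c Y)=\zf_1 X\c'\zf_1 Y$. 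Uniqueness of $\zf_2$ is then immediate, since $\zf_2$ is $R$-linear and forced by (\ref{RespScProd}) on the generating set $(X\odot Y)^{\;\widetilde{}}$, while uniqueness of $\zf_1$ is given by Proposition \ref{Main3}. I expect the well-definedness of $\zf_2$ on the quotient (the vanishing of $\psi$ on $\la\op{Inv}\ra$, resting on relation (\ref{Equal})) to be the only genuinely nontrivial point; everything else reduces to routine checks on generators.
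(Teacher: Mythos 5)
Your proposal is correct and follows essentially the same route as the paper: obtain $\zf_1$ from the freeness of $\op{FS}\cM$ (Proposition \ref{Main3}), define $\zf_2$ as $(-|-)'(\zf_1\odot\zf_1)$ and show it kills $\op{Inv}$ via the invariance relation (\ref{Equal}) in the target, then verify (\ref{RespScProd}) and (\ref{RespActs}) on the generators $(X\odot Y)^{\;\widetilde{}}$ and deduce uniqueness of $\zf_2$ from (\ref{RespScProd}). You correctly identify the descent of $\zf_2$ to the quotient ${\cal R}(\op{FS}\cM)$ as the one genuinely substantive point, which is exactly where the paper concentrates its effort as well.
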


\newcommand{\tm}{\tilde{\zm}}
\newcommand{\tn}{\tilde{\zn}}
\newcommand{\tta}{\tilde{\zt}}

\begin{proof} Let $\cal C$ and $\zf$ be as in the statement of the theorem. Due to freeness of $\op{FS}\cM$, the anchored module map $\zf$ uniquely factors through $\op{FS}\cM$, thus leading to a unique Leibniz pseudoalgebra morphism $\zf_1:\op{FS}\cM\to \cE_1$ such that $\zf_1\,\tilde{\imath}=\zf$. As for $\zf_2$, note that the $\cA$-linear map $$\zf_2=(-|-)'(\zf_1\odot \zf_1):\op{FS}\cM\odot \op{FS}\cM\to \cE_1\odot \cE_1\to \cE_2$$ descends to the quotient ${\cal R}(\op{FS}\cM)$. Indeed, if $I(\tilde\zm,\tilde\zn,\tilde\zt)\in\op{Inv}$ (in the following we omit the symbol `tilde'), we have $$\zf_2(I(\zm,\zn,\zt))=$$ $$([\zf_1\zm,\zf_1\zn]'|\zf_1\zt)'+(\zf_1\zn|[\zf_1\zm,\zf_1\zt]')'-(\zf_1\zm|[\zf_1\zn,\zf_1\zt]'+[\zf_1\zt,\zf_1\zn]')'=0\;,$$ since $\cal C$ is a generalized Courant pseudoalgebra. The resulting $\cA$-linear map ${\cal R}(\op{FS}\cM)\to \cE_2$ will still be denoted by $\zf_2$. Since \be\label{UnivScProdS}(\zm|\zn)=(\zm\odot\zn)^{\;\widetilde{}}\;,\ee it is clear that the requirement (\ref{RespScProd}), i.e. $$(-|-)'(\zf_1\times\zf_1)=\zf_2(-|-)\;,$$ is satisfied. It now suffices to check that the conditions (\ref{RespActs}), i.e. $$\zm'^\ell(\zf_1\times\zf_2)=\zf_2\,\tilde\zm^\ell\;\;\text{and}\;\;\zm'^r(\zf_2\times\zf_1)=\zf_2\,\tilde\zm^r\;,$$ hold as well. Let us detail the last case. Let $\tm,\tn,\tta\in \op{FS}\cM$ (and omit again the `tilde'). Since $$\tilde\zm^r(\zm)(\zn\odot\zt)^{\;\widetilde{}}=(\zm^r(\zm)(\zn\odot\zt))^{\;\widetilde{}}=(-(\zn\c \zt)\odot\zm)^{\;\widetilde{}}\;,$$ the application of $\zf_2$ leads to $$-(\zf_1\zn\c' \zf_1\zt|\zf_1\zm)'\;.$$ The latter coincides with the value of $\zm'^r(\zf_2\times\zf_1)$ on the arguments $(\zn\odot\zt)^{\;\widetilde{}}$ and $\zm$. Finally, uniqueness of $\zf_1$ was already mentioned, and uniqueness of $\zf_2$ is a consequence of (\ref{RespScProd}) and (\ref{UnivScProdS}).
\end{proof}

\section{Acknowledgements} 

Beno\^it Jubin thanks the Luxembourgian National Research Fund for support via AFR grant PDR 2012-1, 3963765. The research of N. Poncin was supported by Grant GeoAlgPhys 2011-2014 awarded by the University of Luxembourg. Kyousuke Uchino is grateful for an invitation to the University of Luxembourg which was the starting point of the present joint work. The authors would also like to thank Yannick Voglaire for useful discussions.

\vskip1cm
\noindent Beno\^it JUBIN\\University of Luxembourg\\Email: benoit.jubin@uni.lu\\ Norbert PONCIN\\University of
Luxembourg
\\ Email: norbert.poncin@uni.lu \\
\noindent Kyousuke UCHINO\\
Email: kuchinon@gmail.com\\

\end{document}